\documentclass{amsart} 
\usepackage{amsfonts}
\usepackage{amsmath}
\usepackage{amsthm}
\usepackage{amssymb}
\usepackage{setspace}
\usepackage[all]{xy}
\usepackage{color}
\usepackage{colortbl}
\usepackage{tikz}
\usepackage{calc, xkeyval, ifthen, todonotes}
\usetikzlibrary{arrows.meta}
\usetikzlibrary{patterns}
\usepackage[margin=1.25in]{geometry}
\usepackage[hidelinks]{hyperref}
\usepackage{comment}
\usepackage{soul}

\definecolor{job}{RGB}{200,65,0}
\definecolor{eric}{RGB}{155,155,255}

\newtheorem{theorem}{Theorem}[section] 
\newtheorem{proposition}[theorem]{Proposition} 
\newtheorem{lemma}[theorem]{Lemma} 
\newtheorem{corollary}[theorem]{Corollary} 

\theoremstyle{definition}
\newtheorem{definition}[theorem]{Definition} 
\newtheorem{remark}[theorem]{Remark} 
\newtheorem{example}[theorem]{Example} 
\newtheorem{notation}[theorem]{Notation} 
\newtheorem{convention}[theorem]{Convention} 


\setcounter{tocdepth}{1}

\DeclareMathOperator{\Hom}{Hom}
\DeclareMathOperator{\supp}{supp}


\newcommand{\R}{{\mathbb R}}
\newcommand{\Z}{{\mathbb Z}}
\newcommand{\N}{{\mathbb N}}
\newcommand{\Sp}{{\mathbb S}}
\newcommand{\At}{{\widetilde{A}}}
\newcommand{\ARt}{{\At_\R}}

\newcommand{\kVec}{{k\text{-}Vec}}

\newcommand{\vecb}{\vec{b}}
\newcommand{\Vhat}{\widehat{V}}
\newcommand{\Vhatdown}{\widehat{V}_{\downarrow}}
\newcommand{\Vhatup}{\widehat{V}_{\uparrow}}

\newcommand{\dbspace}{}

\newcommand{\textdef}[1]{\textbf{#1}}

\title{Decomposition of Pointwise Finite-Dimensional $\Sp^1$ Persistence Modules}

\subjclass[2010]{16G20, 55N31}
\keywords{persistent homology, circular persistence, angle-valued persistence, quiver representations, continuous quivers}

\author{Eric J.\ Hanson}
\address{Department of Mathematics, Brandeis University}
\curraddr{}
\email{ehanson4@brandeis.edu}
\thanks{}

\author{Job D.\ Rock}
\address{Department of Mathematics, Brandeis University}
\curraddr{}
\email{jobrock@brandeis.edu}
\thanks{}

\date{6 July 2020}

\dedicatory{To Kiyoshi Igusa and Gordana Todorov}

\begin{document}

\begin{abstract} We prove that pointwise finite-dimensional $\Sp^1$ persistence modules over an arbitrary field decompose uniquely, up to isomorphism, into the direct sum of a bar code and finitely-many Jordan cells.
These persistence modules have also been called angle-valued or circular persistence modules.
We allow either a cyclic order or partial order on $\Sp^1$ and do not have additional finiteness requirements on the modules.
We also show that a pointwise finite-dimensional $\Sp^1$ persistence module is indecomposable if and only if it is a bar or Jordan cell (a string or a band module, respectively, in representation theory).
Along the way we classify the isomorphism classes of such indecomposable modules. \end{abstract}

\maketitle
\tableofcontents

\section{Introduction}
\subsection{History}
Persistent homology is an essential tool in topological data analysis.
Outside of pure mathematics it has been used in studying the structure of silica glass \cite{HNHEMN} and biological aggregation (swarms) \cite{TZH}.
Mathematical applications include Floer-Novikov theory \cite{UZ} and Morse theory \cite{PSS}.
Burghelea and Dey study persistence modules from certain angle-valued maps (called  circular persistence in \cite{V-J}) and show they decompose into a bar code and Jordan cells \cite{BD}.
Burghelea and Dey's decomposition has been an important component in several other works \cite{DMW,B,BH,CM,D}.
There have been recent constructions similar to $\Sp^1$ persistence in the work by Igusa and Todorov, by Guillermou, and by Sala and Schiffmann \cite{IT13,G,SS}.

Explicitly used in Burghelea and Dey's work is the decomposition of representations of type $\At_n$ quivers. This fits into a recent trend of studying the decomposition of pointwise finite-dimensional (pwf) persistence modules from the perspective of representation theory. A survey of the connection between the decomposition of (discrete) linear persistence modules and represntations of type $A$ quivers can be found in \cite{O}. In the continuous setting, Crawley-Boevey proved pwf persistence modules over $\R$ decompose into a direct sum of bars, or a bar code \cite{C-B}.
Botnan and Crawley-Boevey further proved that pwf persistence modules of the continuous zigzag (or continuous type $A$) decompose in a similar way \cite{BC-B}.
The second author, with Igusa and Torodov, recovered this result via a representation-theoretic approach \cite{IRT}.

\subsection{Contributions and Outline}
The purpose of this paper is to generalize explicit decompositions such as those in \cite{BD, G, SS} to all pwf $\Sp^1$ persistence modules (Theorem \ref{thm:decomposition}).
We do not require further finiteness on modules and we allow either the standard cyclic order or a partial order on $\Sp^1$ (see Remark \ref{rmk:BD finitistic}).
We also provide a description of indecomposable pwf $\Sp^1$ persistence modules (Corollary \ref{cor:complete indecomposable classification}).

We introduce continuous quivers of type $\At$ as categories (Definition \ref{def:ARt}) whose representations (Definition \ref{def:representation}) are exactly $\Sp^1$ persistence modules.
We then define bar(code) persistence modules, which are constructed using an interval subset of $\R$ (Definition \ref{def:string indecomposable}).
Next we define Jordan cell persistence modules which are determined by a linear map ``traveling around'' $\Sp^1$ (Definition \ref{def:band indecomposable}); for a module $V$ we denote this map by $\widehat{V}$.
Before proceeding we show that the isomorphism classes of bars and Jordan cells are analagous to the those for type $\At_n$ quivers:
\begin{theorem}[Theorem \ref{thm:isomorphism classes}]
Let $V$ and $W$ be pwf $\Sp^1$ persistence modules.
\begin{enumerate}
    \item If $V$ and $W$ are bars then $V$ is isomorphic to $W$ if and only if they defined by the same interval of $\R$ modulo $2\pi$.
    \item If $V$ and $W$ are Jordan cells then $V$ is isomorphic to $W$ if and only if there exists a conjugation of transformations $\widehat{V} = A^{-1}\widehat{W} A$.
    \item If $V$ is a bar and $W$ is a Jordan cell then $V$ is not isomorphic to $W$.
\end{enumerate}
\end{theorem}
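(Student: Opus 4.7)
The plan is to exploit the fact that an isomorphism of $\Sp^1$ persistence modules preserves pointwise dimensions and the support of each module; bars will then be distinguished by their support interval modulo $2\pi$, while Jordan cells carry the additional datum of the conjugacy class of the monodromy $\widehat{V}$.

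Part (3) I would dispatch first. A Jordan cell, being determined by a linear map that ``travels around'' $\Sp^1$, has positive pointwise dimension at every point of $\Sp^1$, whereas a bar has positive pointwise dimension on a proper subset of $\Sp^1$ inherited from an interval of $\R$ modulo $2\pi$. Since any isomorphism induces pointwise linear isomorphisms, bars and Jordan cells have incomparable supports and cannot be isomorphic. For part (1), the ``if'' direction follows by writing down the obvious isomorphism: on the common one-dimensional pointwise spaces take identities, and note that the bar structure maps are identities where defined and zero elsewhere, so compatibility is automatic. For the ``only if'' direction, an isomorphism $V \cong W$ forces the supports of $V$ and $W$ in $\Sp^1$ to coincide, and since any lift of a subset of $\Sp^1$ to an interval of $\R$ is determined up to translation by an integer multiple of $2\pi$, the two defining intervals must agree modulo $2\pi$.

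For part (2), I would realize $\widehat{V}$ as the endomorphism of a fixed fiber $V_x$ obtained by composing structure maps along a loop traversing $\Sp^1$ once. Given an isomorphism $\phi\colon V \to W$, set $A = \phi_x$; naturality of $\phi$ along the loop yields $A \widehat{V} = \widehat{W} A$, which is the required conjugation. Conversely, starting from a conjugation $\widehat{V} = A^{-1} \widehat{W} A$, I would propagate $A$ to each fiber by transport along the structure maps of $V$ and $W$, obtaining a pointwise family of isomorphisms, and then check that the conjugation relation is exactly what is needed for this family to be well-defined (i.e., unambiguous) after traversing the full loop.

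The main obstacle will be part (2): pinning down $\widehat{V}$ with a basepoint convention that makes its conjugacy class an invariant, and, in the reverse direction, verifying that the propagated family $\{\phi_y\}_{y \in \Sp^1}$ is compatible with \emph{every} structure map of the representation and not just those traversed by a chosen loop. Because $\Sp^1$ with the cyclic order is a continuous index set, this compatibility must be checked on a continuum of morphisms, and one needs the conjugation relation to guarantee closure upon returning to the basepoint; this is the step that genuinely uses the structure established in Definition \ref{def:band indecomposable} rather than just the formal category of $\Sp^1$ persistence modules.
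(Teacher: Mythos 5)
Your treatment of part (2) is essentially the paper's argument: take $A=f(s_0)$ for an isomorphism $f$, chain naturality squares around $\Sp^1$ to get $A\widehat{V}=\widehat{W}A$, and, conversely, propagate $A$ around the circle to build $f$, with the conjugation relation ensuring closure at the basepoint. The paper just makes the propagation explicit with an iterative diagram over the finitely many $s_i$ and the points of each arc $R_n$; your description captures the same idea.

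Parts (1) and (3), however, rest on a false premise about bars. You claim a bar ``has positive pointwise dimension on a proper subset of $\Sp^1$'' and that its fibers are one-dimensional, so that the support as a subset of $\Sp^1$ determines the defining interval modulo $2\pi$. Neither is true. A bar $M_I$ for a bounded interval $I\subset\R$ can have full support and fibers of arbitrary finite dimension: the paper's own Example~\ref{xmp:string}(2) takes $I=\left[-\pi,\tfrac{7\pi}{2}\right]$ and gets $\dim M_I(x)\in\{2,3\}$ everywhere, and Example~\ref{xmp:string}(3) takes $I=(0,2\pi]$ and gets $\dim M_I(x)=1$ at every point of $\Sp^1$. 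Consequently, in part (3) you cannot separate bars from Jordan cells by support: both may be supported on all of $\Sp^1$. The invariant that actually works, and that the paper uses, is that a bar always has at least one structure map $V(x,y)$ that fails to be an isomorphism (because $I$ is bounded, some basis vector must ``fall off'' the end), whereas every structure map of a Jordan cell is an isomorphism by Definition~\ref{def:band indecomposable}.

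For the ``only if'' half of part (1), equality of supports is likewise insufficient: the intervals $[0,2\pi)$ and $[0,4\pi)$ project to the same subset of $\Sp^1$ but give bars of pointwise dimensions $1$ and $2$, respectively, which are certainly not isomorphic. What you need to compare is the full pointwise dimension function $x\mapsto |E_I(x)|$, not merely where it is nonzero. The paper's argument is exactly this: if no translation-by-$2n\pi$ bijection $I\to J$ exists, then $|E_I(x)|\neq|E_J(x)|$ for some $x$, so $\dim V(x)\neq\dim W(x)$ and $V\not\cong W$. Your ``if'' direction also needs a small fix for the same reason: the isomorphism is not ``identities on one-dimensional spaces'' but the map induced by the unique order-preserving bijections $\mathcal{B}_I(x)\to\mathcal{B}_J(x)$ between the standard ordered bases, which is what makes it compatible with the shift maps $M_I(x,y)$.
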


We define finitistic $\Sp^1$ persistence modules (Definition \ref{def:finitistic}), each of which determines a partition $\mathcal P_V$ on $\Sp^1$ (Definition \ref{def:the partition}) that yields an auxiliary quiver $Q_V$ of type $\At_{|\mathcal P_V|-1}$ (Definition \ref{def:auxiliary quiver}).
We show that direct sums of indecomposable representations of $Q_V$ can be ``pushed down'' to indecomposable $\Sp^1$ persistence modules (Lemmas \ref{lem:push down}, \ref{lem:indecomposable push down}, and \ref{lem:decomposition push down}).
We use this technique to prove the main results about our decomposition, summarized below.
\begin{theorem}[Theorems \ref{thm:indecomposables} and \ref{thm:decomposition}, Corollary \ref{cor:complete indecomposable classification}]
Let $V$ be a pointwise finite-dimensional $\Sp^1$ persistence module.
\begin{enumerate}
    \item $V$ is indecomposable if and only if it is either a bar or Jordan cell.
    \item $V$ decomposes uniquely (up to isomorphism) into a direct sum of a bar code and finitely-many Jordan cells.
\end{enumerate}
\end{theorem}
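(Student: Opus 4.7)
The strategy I would adopt is to first handle the finitistic case via the push-down machinery and then reduce the general pointwise finite-dimensional case to it. If $V$ is finitistic, then the auxiliary quiver $Q_V$ is a type $\At_n$ quiver whose representation theory is classical: by Gabriel together with the Butler--Ringel/Crawley-Boevey classification for cyclic type $\At_n$, every representation of $Q_V$ decomposes uniquely into string and band modules. Applying the push-down lemma (Lemma \ref{lem:decomposition push down}) transports this into a decomposition of $V$ into bars and Jordan cells, and the push-down correspondence for indecomposables (Lemma \ref{lem:indecomposable push down}) identifies the summands with the correct types.

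For general pwf $V$, I would construct a direct sum decomposition $V \cong V_J \oplus V_B$, where $V_J$ is a finitistic submodule absorbing all ``monodromy'' behavior and $V_B$ contains no Jordan cell summands. The key bound making this possible is the following: for every $x \in \Sp^1$, each Jordan cell summand contributes at least $1$ to $\dim V(x)$, so the number of Jordan cell summands is at most $\dim V(x)$, which is finite by hypothesis. In particular, at most finitely many arcs of the partition $\mathcal{P}_V$ can carry ``circulating'' data, and collecting this finite data yields a finitistic summand $V_J$. The complementary summand $V_B$, having trivial monodromy, lifts to a pwf $\R$-persistence module whose bar code decomposition is supplied by Crawley-Boevey's theorem. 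Combining the two decompositions gives the bar code plus finitely many Jordan cells promised by Theorem \ref{thm:decomposition}.

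Uniqueness would follow from a Krull--Schmidt style argument: using Theorem \ref{thm:isomorphism classes}, I would verify that the endomorphism ring of each bar and each Jordan cell is local (for bars this is an interval-persistence computation; for Jordan cells the endomorphism ring is a subquotient of an iterated matrix algebra over $k[T,T^{-1}]$, and localness reduces to indecomposability of $\widehat{V}$ under conjugation). With local endomorphism rings in place, the standard cancellation argument for pwf modules over infinite coproduct-type categories gives uniqueness up to isomorphism and reordering. Finally the indecomposability characterization (Theorem \ref{thm:indecomposables} and Corollary \ref{cor:complete indecomposable classification}) is immediate: bars and Jordan cells are indecomposable by local endomorphism rings, and any indecomposable pwf $V$ must coincide with a single summand in its decomposition, hence be a bar or a Jordan cell.

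The main obstacle I anticipate is the construction of the splitting $V \cong V_J \oplus V_B$ when $\mathcal{P}_V$ is infinite. One must single out, globally over $\Sp^1$, the ``eventually stable'' part of the monodromy; natural candidates are something like $V_J(x) = \bigcap_{n} \mathrm{im}\,\widehat{V}^n$ and $V_B = V/V_J$, but one then needs to produce a genuine splitting (not just a short exact sequence) and verify that $V_J$ really is finitistic. Handling the interaction between this limiting construction and the partial-versus-cyclic order setting (see Remark \ref{rmk:BD finitistic}) is where I expect most of the technical subtlety to lie.
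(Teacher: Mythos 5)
Your plan agrees with the paper on the finitistic core: refine $V$, pass to the auxiliary quiver $Q_V$, decompose the auxiliary representation $M_V$ using the classical type $\At_n$ theorem, and push the summands back down (Lemmas \ref{lem:push down}, \ref{lem:indecomposable push down}, \ref{lem:decomposition push down}). Your observation that each Jordan cell summand contributes at least $1$ to $\dim V(x)$ at every $x$, so that pointwise finite-dimensionality bounds the number of Jordan cells, is correct and is a nice alternative way to see why only finitely many Jordan cells can occur (the paper instead observes it from the finite-dimensionality of $M_V$).

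Where you diverge is in the reduction of a general pwf $V$ to the finitistic case, and this is where your argument has a real gap that the paper avoids. You propose a global splitting $V \cong V_J \oplus V_B$ with $V_J$ ``absorbing all monodromy'' and suggest $V_J(x) = \bigcap_n \operatorname{im}\widehat{V}^n$, but $\widehat{V}$ is only \emph{defined} (Definition \ref{def:hat map}) when every generating morphism $V(g)$ is an isomorphism, which will not hold for a general pwf $V$; for $S \neq \emptyset$ the formula for $\widehat{V}$ involves inverting maps that need not be invertible. Even when $\widehat{V}$ makes sense, you have a subrepresentation $V_J \hookrightarrow V$, not a direct summand: the short exact sequence $0 \to V_J \to V \to V/V_J \to 0$ need not split, and producing a splitting is precisely the hard content you acknowledge but do not supply. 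There is also a conflation in your sketch: $\mathcal P_V$ is a finite partition of $\Sp^1$ \emph{by construction}, so ``at most finitely many arcs of $\mathcal P_V$ carry circulating data'' is vacuous, and in any case the dichotomy ``monodromy vs.\ no monodromy'' does not match the paper's finitistic-vs.-nonfinitistic dichotomy, since the finitistic summand $V'$ in the paper can still contain bars.

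The paper's reduction is both simpler and sidesteps all of this. For each arc $\overline{R}_n$, decompose $V|_{\overline{R}_n}$ into a bar code by Crawley-Boevey, and let $W_n$ be the sum of those bars whose support sits inside the \emph{open} arc $R_n$. Lemma \ref{lem:good split} then shows each such bar is already a direct summand of $V$ (the zero extensions of the inclusion and retraction commute with the structure maps because the support avoids the endpoints). This peels off $\bigoplus_n W_n$ and leaves a complement $V'$ which is finitistic \emph{by construction}, no invertibility or intersection-of-images argument required, and the push-down machinery then applies. If you want to salvage your route you would need either a proof that the short exact sequence splits, or (cleaner) to replace your $V_J$/$V_B$ split by the paper's $W_n$-stripping via Lemma \ref{lem:good split}. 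Your uniqueness argument via local endomorphism rings is sound in spirit and essentially the content of Corollary \ref{cor:local endomorphism ring} together with Botnan--Crawley-Boevey, though the paper's Theorem \ref{thm:decomposition} obtains uniqueness more directly from the uniqueness of each constituent decomposition.
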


Our contributions are the extension of known decomposition theorems, the explicitness of an arbitrary decomposition, and the complete description of indecompsoable pwf $\Sp^1$ persistence modules.
Botnan and Crawley-Boevey prove that a pwf persistence module over a small category must have a decomposition into indecomposable modules, each with local endomorphism ring, that is unique up to isomorphism \cite{BC-B}.
However, their result does not provide a description of the indecomposable summands because in full generality this is not possible.
\color{black}Botnan and Crawley-Boevey's complete result combined with ours yields the following corollary, generalizing a well-known property about indecomposable type $\At_n$ representations.

\begin{corollary}[Corollary \ref{cor:local endomorphism ring}]
\color{black}A pwf $\Sp^1$ persistence module $V$ has a local endomorphism ring if and only if it is either a bar or a Jordan cell.
\end{corollary}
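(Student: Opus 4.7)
The plan is to combine the indecomposability classification (Theorem \ref{thm:indecomposables}) with the Krull--Schmidt-type existence result of Botnan and Crawley-Boevey \cite{BC-B}. For the forward direction, I would invoke the standard observation that any module whose endomorphism ring is local must be indecomposable: a nontrivial splitting $V \cong V_1 \oplus V_2$ would produce an idempotent endomorphism distinct from $0$ and $1$, but in a local ring the only idempotents are $0$ and $1$ (given any idempotent $e$, one of $e$ or $1-e$ must be a unit, forcing $e \in \{0,1\}$). Theorem \ref{thm:indecomposables} then forces $V$ to be a bar or a Jordan cell.

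For the reverse direction, suppose $V$ is a bar or a Jordan cell, so by Theorem \ref{thm:indecomposables} it is indecomposable. The result of \cite{BC-B} applied to the category of pwf $\Sp^1$ persistence modules gives an isomorphism $V \cong \bigoplus_{i \in I} V_i$ into summands each having local endomorphism ring. Indecomposability of $V$ forces all but one of the $V_i$ to vanish, so $V \cong V_i$ for a single index $i$ and hence has local endomorphism ring.

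I do not anticipate any genuine obstacle: the heavy lifting is carried by Theorem \ref{thm:indecomposables}, and the only additional ingredient is the abstract Krull--Schmidt statement of \cite{BC-B}, which applies to pwf representations of any small category. The corollary is therefore essentially a clean packaging of our main classification together with the general existence theorem, yielding the circular analogue of the well-known fact for type $\At_n$ representations.
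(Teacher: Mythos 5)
Your argument has the right skeleton and matches the paper's route (combine the classification of indecomposables with the Botnan--Crawley-Boevey result for small categories), but the forward direction contains a misattribution that, taken literally, leaves a gap. You write that once local endomorphism ring implies indecomposable, ``Theorem \ref{thm:indecomposables} then forces $V$ to be a bar or Jordan cell.'' Theorem \ref{thm:indecomposables} only proves the converse implication: that every bar and every Jordan cell is indecomposable. It does \emph{not} say that an indecomposable pwf representation must be a bar or Jordan cell. That direction requires the decomposition theorem (Theorem \ref{thm:decomposition}): given an arbitrary indecomposable $V$, decompose it into a bar code plus Jordan cells, and observe that indecomposability forces exactly one summand. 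This is packaged as Corollary \ref{cor:complete indecomposable classification}, which is what you should be citing here.

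Beyond that citation fix, the structure is fine. One small stylistic difference from the paper: the paper invokes Botnan--Crawley-Boevey's result directly in its ``iff'' form (local endomorphism ring $\Leftrightarrow$ indecomposable, for pwf modules over a small category) and then applies Corollary \ref{cor:complete indecomposable classification}. You instead supply the forward direction by hand with the idempotent argument and use only the decomposition-into-locals part of \cite{BC-B} for the reverse; both are valid, and yours is marginally more self-contained, but the overall dependency structure is the same.
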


\subsection{Further Study}
The decomposition of pwf representations of $\At_n$ into bars and Jordan cells (or string and band modules) extends to the a larger class called string algebras (see \cite{BR,C-B18}). We suspect our decomposition of $\Sp^1$ persistence modules likewise extends to some larger class of ``continuous string algebras.''

We expect the \emph{category} of finitely-generated $\Sp^1$ persistence modules to be similar to that of $\At_n$ pwf representations. Furthermore, similar objects to pwf $\Sp^1$ persistence modules were used by Igusa and Todorov \cite{IT13} to construct Continuous Frobenius categories which were later used in studying (continuous) cluster categories \cite{HJ, IT15}; this connection requires further investigation.

Finally, continuous analogues for the decomposition theorems of persistence modules of other types of quivers (for example, the commutative ladder quivers used by Escolar and Hiraoka \cite{EH}) remain an open question.

\section{$\Sp^1$ Persistence Modules}
Let $k$ be a field.
We will denote the elements of $\Sp^1$ by $e^{i\theta}$ corresponding to the unit circle in $\mathbb C$.
By $\xi$ we denote the standard covering $\xi:\R\to\Sp^1$ given by $\theta\mapsto e^{i\theta}$.

In this section we define the types of partial orders we allow on $\Sp^1$ and its persistence modules in this context.
One may think of the partial order on $\Sp^1$ as obtained from some certain partial order on $\R$ via the 1 point compactification.
Such partial orders are described by the second author along with Igusa and Todorov in \cite{IRT} as having finitely-many sinks and sources. 
For example, one may invert $\R$'s standard order on $(-\infty,0]$ and then compactifiy $\R$. 
This partial order yields Example \ref{xmp:first example} (1), shown in Figure \ref{fig:first example}.
We construct a category from this intuition, drawing inspiration from \cite{IRT}, starting with Definitions \ref{def:the S's} and \ref{def:the R's}, which will be useful throughout the rest of the paper.
\begin{definition}\label{def:the S's}
Let $S\subset\Sp^1$ such that $|S|$ is even (possibly 0).
If $S\neq \emptyset$, elements of $S$ are indexed by an interval subset of $\N$ containing 0 such that $s_n$ denotes the element of $S$ with index $n$.
The elements of $S$ are indexed counterclockwise starting from a chosen $s_0$ such that there exists $\beta,\gamma$ where $0<\gamma-\beta<2\pi$, $s_0=e^{i\beta}$, and $s_{|S|-1}=e^{i\gamma}$.
Additionally, for all $s_n\in S$ there exists $\theta$ such that $s_n=e^{i\theta}$ and $\beta \leq \theta \leq \gamma$.
For convenience we denote by $s_{|S|}$ the element $s_0\in S$.
\end{definition}

\begin{definition}\label{def:the R's}
Let $S\subset \Sp^1$ be as in Definition \ref{def:the S's}.
\begin{itemize}
\item If $S=\emptyset$ define $\alpha_0=0$, $\alpha_1=\pi$, and $\alpha_2=2\pi$. 
\item If $S$ is nonempty define $\alpha_0\in [0,2\pi)$ such that $e^{i\alpha_0}=s_0$. 
For $0<n<|S|$, let $\alpha_n \in (\alpha_0, \alpha_0+2\pi)$ such that $s_n = e^{i\alpha_n}$.
Let $\alpha_{|S|} = \alpha_0+2\pi$.
\end{itemize}
For each $\alpha_0\leq \alpha_n < \alpha_0+2\pi$ define
\begin{align*} R_n &= \{e^{i\theta} : \alpha_n < \theta <\alpha_{n+1}\} & \overline{R}_n &= \{e^{i\theta} : \alpha_n \leq \theta \leq \alpha_{n+1}\}. \end{align*}
Furthermore, define $\mathcal R = \{R_n\}$ and note $\bigcup_{\mathcal R} \overline{R}_n = \Sp^1$.
\end{definition}

\begin{definition}\label{def:ARt}
A \textdef{continuous quiver of type $\At$}, denoted $\ARt$, is a category from $\Sp^1$ and $S$ constructed in the following way.
Let $S$ and $\mathcal R$ be as in Definitions \ref{def:the S's} and \ref{def:the R's}.
The objects of $\ARt$ are the elements of $\Sp^1$.
There is a \textdef{generating morphism} $g_{x,y}:x\to y$ if and only if there exists $\theta,\phi\in\R$ where $x=e^{i\theta}, y=e^{i\phi}$ such that the following hold.
\begin{itemize}
\item If $S=\emptyset$ then the inequality $0\leq \phi - \theta < 2\pi$ is satisfied.
\item If $S\neq\emptyset$, then\begin{enumerate}
    \item There exists $R_n\in\mathcal R$ such that $\theta,\phi\in[\alpha_n,\alpha_{n+1}]$.
    \item If $n$ is even then $\phi\leq \theta$ and if $n$ is odd then $\theta\leq\phi$.
When $|S|=2$ there are \emph{two} generating morphisms $g^{\uparrow}_{s_1,s_0},g^{\downarrow}_{s_1,s_0}:s_1\to s_0$.
\end{enumerate}
\end{itemize}
When $\theta=\phi$ we define the generating morphism to be the identity.

Let $g_{e^{i\theta}, e^{i\phi}}$ and $g_{e^{i\phi}, e^{i\psi}}$ be generating morphisms. To define $g_{e^{i\phi}, e^{i\psi}}\circ g_{e^{i\theta}, e^{i\phi}}$, suppose without loss of generality $|\phi-\theta| <2\pi$ and $|\psi-\phi|<2\pi$.
When $|S|=2$, $e^{i\theta}=s_1$, and $s^{i\psi}=s_0$ the composition is $g^{\uparrow}_{s_1,s_0}$ if $\theta < \psi$ and $g^{\downarrow}_{s_1,s_0}$ if $\psi < \theta$.
In other cases where $|\psi-\theta|<2\pi$ we define the composition to be the generating morphism $g_{e^{i\theta}, e^{i\psi}}$.
If none of these are satisfied, $g_{e^{i\phi}, e^{i\psi}}\circ g_{e^{i\theta}, e^{i\phi}}$ is a distinct morphism.
\end{definition}

\begin{notation}\label{note:order}
Let $\ARt$ be a continuous quiver of type $\At$.
If there is a generating morphism $x\to y$ for $x,y\in \Sp^1$ we write $y\preceq x$.
\end{notation}

\begin{remark}[Ordering]\label{rmk:order}
	If $S=\emptyset$ we have the standard counterclockwise cyclic order on $\Sp^1$
	(By symmetry this covers the clockwise cyclic order as well).
	
	If $S\neq\emptyset$ then $\preceq$ is a partial order that satisfies the following properties:
	\begin{enumerate}
		\item The order ``reverses'' at each element of $S$.
		\item If $s_n \in S$ with $n$ even, then $s_n$ is a sink. That is, if $x \preceq s_n$ then $x = s_n$.
		\item If $s_n \in S$ with $n$ odd, then $s_n$ is a source. That is, if $s_n \preceq x$ then $x = s_n$.
	\end{enumerate}
\end{remark}

\begin{remark}[Hom sets]\label{rmk:hom sets}
Suppose $S=\emptyset$.
For each $x\in\Sp^1$ there is a morphism $\omega_x:x\to x$ obtained as the composition $g_{y,x}\circ g_{x,y}$ for any other $y\neq x$ in $\Sp^1$.
Thus, for $x\neq y$ in $\Sp^1$ we have
\begin{displaymath} \Hom_{\ARt}(x,y)=\{g_{x,y}\circ (\omega_x)^n: n\geq 0\} =\{ (\omega_y)^n \circ g_{x,y}: n\geq 0\}, \end{displaymath} where $(\omega_x)^0=1_x$ and $(\omega_y)^0=1_y$.

Now suppose $|S|>2$. Then we have
\begin{displaymath}
\Hom_{\ARt}(x,y) = \begin{cases} \{g_{x,y}\} & y\preceq x \\ \emptyset & \text{otherwise}. \end{cases}
\end{displaymath}
In the special case when $|S|=2$ (see Example \ref{xmp:first example} (1)) we have $\Hom_{\ARt}(s_1,s_0)=\{g^{\uparrow}_{s_1,s_0},g^{\downarrow}_{s_1,s_0}\}$ but all other $\Hom$ sets are as described above for $S\neq\emptyset$.

Note that this shows that a continuous quiver of type $\At$ is a \emph{small} category since the objects also form a set.
\end{remark}

\begin{example}\label{xmp:first example}\label{xmp:good R}
We give two examples of a continuous quiver of type $\At$. Visualizations of the partial orders are depicted in Figure \ref{fig:first example}.
\begin{enumerate}
\item
We now formally observe the example from the beginning of this section.

Let $S\subset \Sp^1$ be $\{e^{i3\pi/2}, e^{5i\pi/2}\}$ where $s_0=e^{3i\pi/2}$ and $s_{1}=e^{i5\pi/2}$. 
If $\theta<\phi$ in $[\frac{3\pi}{2},\frac{5\pi}{2}]$ then $e^{i\theta}\preceq e^{i\phi}$.
If $\theta<\phi$ in $[\frac{5\pi}{2} , \frac{7\pi}{2}]$ then $e^{i\phi}\preceq e^{i\theta}$.
\smallskip

\item 
We will use this example for Examples \ref{xmp:string} and \ref{xmp:band}.

Let $S=\{e^0, e^{i\pi/2}, e^{i\pi}, e^{i3\pi/2}\}$ where $s_0=e^0$, $s_1=e^{i\pi/2}$, $s_2=e^{i\pi}$, and $s_3=e^{i3\pi/2}$.
If $\theta < \phi$ in $[0,\frac{\pi}{2}]$ or $[\pi,\frac{3\pi}{2}]$ then $e^{i\theta}\preceq e^{i\phi}$.
If $\theta < \phi$ in $[\frac{\pi}{2},\pi]$ or $[\frac{3\pi}{2},2\pi]$ then $e^{i\phi}\preceq e^{i\theta}$.
\end{enumerate}
\end{example}

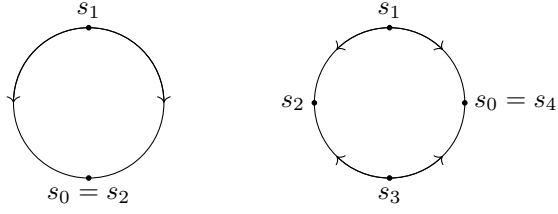
\begin{figure}	\begin{center}	\begin{tikzpicture}
\draw[white] (-3,0) -- (7,0);

\draw (1,0) arc(0:360:1);
\draw[->] (0,1) arc(90:180:1);
\draw[->] (0,1) arc(90:0:1);
\filldraw (0,1) circle[radius=.3mm];
\filldraw (0,-1) circle[radius=.3mm];
\draw (0,-1) node[anchor=north] {$s_0=s_2$};
\draw (0,1) node[anchor=south] {$s_1$};

\draw (5,0) arc (0:360:1);
\filldraw (5,0) circle[radius=.3mm];
\filldraw (3,0) circle[radius=.3mm];
\filldraw (4,-1) circle[radius=.3mm];
\filldraw (4,1) circle[radius=.3mm];
\draw[->] (4,1) arc (90:135:1);
\draw[->] (4,1) arc (90:45:1);
\draw[->] (4,-1) arc (270: 225:1);
\draw[->] (4,-1) arc (270:315:1);
\draw (5,0) node[anchor=west] {$s_0=s_4$};
\draw (4,1) node[anchor=south] {$s_1$};
\draw (3,0) node[anchor=east] {$s_2$};
\draw (4,-1) node[anchor=north] {$s_3$};

\end{tikzpicture}
\caption{Example \ref{xmp:first example} (1) and (2) on the left and right, respectively.}\label{fig:first example}
\end{center}	\end{figure}

Throughout the paper we will often replace the phrase ``$\Sp^1$ persistence module'' with ``representation'' both for brevity and to indicate the point of view of our proof techniques.
Let $\kVec$ be the category of $k$-vector spaces (including infinite-dimensional spaces).

\begin{definition}\label{def:representation}
Let $\ARt$ be a continuous quiver of type $\At$.
A \textdef{representation} $V$ of $\ARt$ over $k$ is a functor $V:\ARt\to \kVec$.
If $y\preceq x\in \Sp^1$ we denote by $V(x,y)$ the morphism in $\kVec$ obtained by applying $V$ to the generating morphism $g_{x,y}$ in $\ARt$ with two caveats.

(1) If $|S|=2$ we write $V(s_1,s_0)$ to mean $V(g^{\downarrow}_{s_1,s_0})$ and write $V(s_1,s_2)$ to mean $V(g^{\uparrow}_{s_1,s_0})$.
(2) If $S=\emptyset$ we write $V(x,x)$ for the identity on $V(x)$.
The morphism obtained by applying $V$ to $\omega_x^n$, for $n\geq 1$, is written $V(\omega_x^n)$. 
\end{definition}

The reader may note that $t^n$ in the definition of an $\Sp^1$ representation by Igusa and Todorov \cite[Definition 1.1.1]{IT13} plays a similar role to $V(\omega_x^n)$ in Definition \ref{def:representation}.

Throughout the paper we will often state definitions only over generating morphisms as they indeed generate all the morphisms in $\At$.

\begin{definition}\label{def:pwf}
Let $\ARt$ be a continuous quiver of type $\At$.
We call a representation $V$ of $\ARt$ \textdef{pointwise finite-dimensional} (or \textdef{pwf}) if $\dim V(x)<\infty$ for all $x\in \Sp^1$.
\end{definition}

\begin{definition}\label{def:morphism}\label{def:isomorphism}
Let $\ARt$ be a continuous quiver of type $\At$ and $V, W$ representations of $\ARt$ over $k$.
A \textdef{morphism of representations} $f:V\to W$ is a natural transformation from $V$ to $W$.
I.e., the following diagram commutes for each generating morphism $g_{x,y}$ in $\ARt$.
\begin{displaymath}\xymatrix{
V(x) \ar[r]^-{V(x,y)} \ar[d]_-{f(x)} & V(y) \ar[d]^-{f(y)} \\ W(x) \ar[r]_-{W(x,y)} & W(y).
}\end{displaymath}

If $f(x)$ is an isomorphism for all $x\in \Sp^1$ we call $f$ an \textdef{isomorphism}.
If there is an isomorphism $f:V\to W$ we say $V$ and $W$ are \textdef{isomorphic} and write $V\cong W$.
\end{definition}

\begin{remark}\label{rmk:isomorphism}
Let $f:V\to W$ be a morphism of representations.
If $f$ is an isomorphism then by letting $g(x)= f^{-1}(x)$ for all $x\in\Sp^1$ we obtain a morphism $g:W\to V$.
Furthermore, $(g\circ f)(x)$ is the identity on each $V(x)$ and $(f\circ g)(x)$ is the identity on each $W(x)$, thus justifying the name isomorphism.
\end{remark}

\begin{definition}\label{def:direct sum}
The \textdef{direct sum} $\bigoplus_{\zeta\in Z} V_\zeta$ of a multi-set of representations $\{V_\zeta\}_{\zeta\in Z}$ has vector spaces $\left(\bigoplus_{\zeta\in Z} V_\zeta\right)(x) = \bigoplus_{\zeta\in Z} \left(V_\zeta(x)\right)$ and linear maps given by the direct sum of linear maps on each of the vector space summands. Each $V_\zeta$ is a \textdef{summand} of $\bigoplus_{\zeta\in Z} V_\zeta$. We say the direct sums $\bigoplus_{\zeta\in Z} V_\zeta$ and $\bigoplus_{\chi\in X} V_\chi$ are the same \textdef{up to isomorphism} if there exists a bijection $\Phi:Z \rightarrow X$ such that $V_\zeta \cong V_{\Phi(\zeta)}$ for all $\zeta \in Z$.
\end{definition}

\begin{remark}\label{rmk:direct sum}
These two properties follow immediately from Definition \ref{def:direct sum}.
\begin{enumerate}
    \item Any two direct sums which are the same up to isomorphism are isomorphic, but the converse is not true in general.
    \item A representation $U$ is a summand of a representation $V$ if and only if there exist morphisms $f:U\to V$ and $g:V\to U$ such that $g\circ f= 1_U$.
\end{enumerate}
\end{remark}

\begin{definition}
Let $\ARt$ be a continuous quiver of type $\ARt$ and $V$ a representation of $\ARt$.
Let $R\subset \Sp^1$ be an arbitrary subset.
Recalling our notation in Definition \ref{def:representation}, define the \textdef{restriction of $V$ to $R$}, denoted $V|_R$, to be
\begin{align*}
V|_R(x) &= \begin{cases} V(x) & x\in R \\ 0 & \text{otherwise} \end{cases} \\
V|_R(x,y) &= \begin{cases} V(x,y) & V(z,y)\circ V(x,z)=V(x,y) \Rightarrow z\in R \\ 0 & \text{otherwise} \end{cases}
\end{align*}
Note if $S=\emptyset$ then $V(\omega_x)=0$ for all $x\in\Sp^1$ unless $R=\Sp^1$.
\end{definition}

The following lemma is essential to prove our results in Section \ref{sec:the thms}.
\begin{lemma}\label{lem:good split}
Let $\ARt$ be a continuous quiver of type $\At$ and $V$ a representation of $\ARt$.
Let $\beta,\gamma\in\R$ such that $0<\gamma-\beta\leq 2\pi$ and let
\begin{align*}
R&=\{e^{i\theta} :\beta<\theta<\gamma\} &
\overline{R}&= \{e^{i\theta} : \beta\leq \theta\leq \gamma\}.
\end{align*}
Suppose there exists a representation $U$ of $\ARt$ such that $\supp U\subset R$.
If $U$ is a summand of $V|_{\overline{R}}$ then $U$ is a summand of $V$.
\end{lemma}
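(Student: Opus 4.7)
The plan is to apply Remark \ref{rmk:direct sum}(2): the summand hypothesis gives morphisms $f \colon U \to V|_{\overline R}$ and $g \colon V|_{\overline R} \to U$ with $g \circ f = 1_U$, and it suffices to produce $\tilde f \colon U \to V$ and $\tilde g \colon V \to U$ with $\tilde g \circ \tilde f = 1_U$. I would set $\tilde f(x) = f(x)$ and $\tilde g(x) = g(x)$ for $x \in \overline R$ (using that $V|_{\overline R}(x) = V(x)$ there), and $\tilde f(x) = 0 = \tilde g(x)$ for $x \notin \overline R$. The composition identity $\tilde g \circ \tilde f = 1_U$ then holds pointwise: on $R$ it reduces to $g(x) \circ f(x) = 1_{U(x)}$, and off $R$ the hypothesis $\supp U \subset R$ forces $U(x) = 0$ so both sides are zero.

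The real content is to verify naturality of $\tilde f$ and $\tilde g$ by checking the commuting square for each generating morphism $g_{x,y}$, and, when $S = \emptyset$, also for each self-map $\omega_x$. I would split into cases by the location of $x$ and $y$ relative to $R$ and $\overline R$. If at least one of $x, y$ lies outside $R$, then the corresponding vector space in $U$ vanishes and at least one of $\tilde f(x), \tilde f(y)$ is zero, so both paths around the square are zero maps. If $x, y \in \overline R$ and the arc traced by $g_{x,y}$ stays inside $\overline R$, then $V|_{\overline R}(x,y) = V(x,y)$ by the definition of the restriction, and naturality of $\tilde f$ is inherited directly from that of $f$.

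The main obstacle is the intermediate case where $x, y \in \overline R$ but the generating arc of $g_{x,y}$ exits $\overline R$; this happens in the cyclic $S = \emptyset$ setting (in particular for the wrap-around maps $\omega_x$) and in the partially ordered setting when $R$ cuts through an $\overline R_n$ at an interior point. Here $V|_{\overline R}(x,y) = 0$, but $V(x,y)$ need not vanish, so naturality of $f$ alone is insufficient. The crucial point is that $\supp U \subset R$ is strictly open, so $U(e^{i\beta}) = U(e^{i\gamma}) = 0$, and hence $f$ and $g$ vanish at the endpoints of the arc. I would therefore factor the offending morphism through a boundary vertex $z \in \{e^{i\beta}, e^{i\gamma}\}$ as $V(x,y) = V(z,y) \circ V(x,z)$, where the sub-arc $g_{x,z}$ does remain inside $\overline R$. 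Applying naturality of $f$ at $g_{x,z}$ yields $V(x,z) \circ f(x) = f(z) \circ U(x,z) = 0$ (since $f(z) = 0$), whence $V(x,y) \circ f(x) = 0$, matching the vanishing of $\tilde f(y) \circ U(x,y)$. The same factorization resolves the analogous square for $\tilde g$ and for the self-maps $\omega_x$, completing the verification and hence the proof.
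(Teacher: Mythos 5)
Your construction of $\tilde f, \tilde g$ and the key idea---factoring through a boundary point $z \in \overline R \setminus R$ where $U(z) = 0$ forces $f(z) = 0$---match the paper's proof. However, your first case is stated too quickly: when $x \in R$ and $y \notin R$, the composite $\tilde f(y) \circ U(x,y)$ is trivially zero because $U(y)=0$, but $V(x,y) \circ f(x)$ is not obviously zero, and ``at least one of $\tilde f(x), \tilde f(y)$ is zero'' does not by itself kill this second composite. Showing $V(x,y) \circ f(x) = 0$ requires exactly the boundary-factorization argument you reserve for your third case, and that case, as you state it, is restricted to $x, y \in \overline R$ and so does not on its face cover $y \notin \overline R$. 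The cleaner dichotomy, which is essentially what the paper uses, is: either some $z \in \overline R \setminus R$ lies on the arc of $g_{x,y}$, in which case both composites factor through $U(z) = 0$ and $f(z) = 0$ regardless of where $y$ sits; or no such $z$ exists, in which case the arc lies entirely inside $R$ (so $V|_{\overline R}(x,y)=V(x,y)$ and naturality of $f$ applies directly) or entirely outside $\overline R$ (so $U(x)=U(y)=0$). With that reorganization your argument is complete and is the same as the paper's.
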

\begin{proof}
By assumption there exists morphisms $f:U\to V|_{\overline{R}}$ and $g:V|_{\overline{R}}\to U$ such that $g\circ f = 1_U$.
For all $x\in \Sp^1$, define $\tilde{f}(x):U(x)\to V(x)$ and $\tilde{g}(x):V(x)\to U(x)$:
\begin{align*}
\tilde{f}(x) &= \begin{cases} f(x) & x\in R \\ 0 & \text{otherwise}. \end{cases} &
\tilde{g}(x) &= \begin{cases} g(x) & x\in R \\ 0 & \text{otherwise}. \end{cases}
\end{align*}
By definition $\tilde{g}(x) \circ \tilde{f}(x) = 1_{U(x)}$ for all $x\in\Sp^1$.
We must show that $\{\tilde{f}(x)\}$ and $\{\tilde{g}(x)\}$ yield morphisms $\tilde{f}:U\to V$ and $\tilde{g}:V\to U$.

Let $g_{x,y}$ be a generating morphism in $\ARt$ and consider the diagram
\begin{displaymath}\xymatrix{
U(x) \ar[d]_-{U(x,y)} \ar[r]^-{\tilde{f}(x)} & V(x) \ar[d]^-{V(x,y)} \ar[r]^-{\tilde{g}(x)} & U(x) \ar[d]^-{U(x,y)} \\
U(y) \ar[r]^-{\tilde{f}(y)} & V(y) \ar[r]^-{\tilde{g}(y)} & U(y) .
}\end{displaymath}
If there exists $z\in\overline{R}\setminus R$ such that $g_{x,y}=g_{z,y}\circ g_{x,z}$ then $V(z,y)\circ (x,z)\circ \tilde{f}(x)=0$ by assumption.
In this case $U(x,y)=0$ as well and so the diagram commutes.
It is clear the diagram commutes if there is no such $z\in\overline{R}\setminus R$ or when $x,y\notin R$.
\end{proof}

\begin{remark}[Relation to known work]\label{rmk:BD finitistic}\label{rmk:other S^1 persistence}
Burghelea and Dey's original definition \cite{BD} yields finitistic $\Sp^1$ persistence modules where $\Sp^1$ has a partial order.
In Sala and Schiffmann's work \cite[\S 5.2]{SS}, coherent persistence modules are certain pwf $\Sp^1$ persistence modules where $\Sp^1$ has cyclic order.
A pwf $\Sp^1$ persistence module $V$ in either of \cite{BD,SS} partitions $\Sp^1$ into finitely-many pieces and on each piece $V$ must be essentially constant.
Constructible sheaves on the circle in Guillermou's work \cite[\S 4.3]{G} relaxes some conditions for cyclic $\Sp^1$ but does not include partial orders on $\Sp^1$.
Both \cite{BD,G} decompose their respective modules into bars and Jordan cells.
\end{remark}

\section{Bar Codes and Jordan Cells}
In this section we describe the bar (string) and Jordan cell (band) representations.
We will show both representations are indecomposable in Section \ref{sec:the thms}.

\begin{definition}\label{def:I and E}
Let $I\subset \R$ be a bounded interval inheriting the ordering from $\R$.
For each $e^{i\theta} \in\Sp^1$ define $E(e^{i\theta}) = \xi^{-1}(e^{i\theta}) \cap I$ also inheriting the ordering from $\R$.
\end{definition}

\begin{definition}\label{def:string indecomposable}
Let $\ARt$ be a continuous quiver of type $\At$.
Let $I$ and the $E(e^{i\theta})$'s be as in Definition \ref{def:I and E}.
We define a representation $M_I$ with vector spaces $M_I(e^{i\theta})$:
\begin{displaymath}
M_I(e^{i\theta}) = k^{|E(e^{i\theta})|}.
\end{displaymath}

For each $e^{i\theta}\in\Sp^1$ we do the following.
Let $\mathcal B(e^{i\theta})$ be the standard ordered basis of $M_I(e^{i\theta})$.
There is a unique order-preserving bijection $E(e^{i\theta}) \to \mathcal B(e^{i\theta})$.
Denote by $\vecb_\beta$ the image of $\beta$ in $\mathcal B(e^{i\phi})$.

Let $\theta,\phi\in[\alpha_n,\alpha_{n+1}]$ such that $g_{e^{i\theta}, e^{i\phi}}$ is a generating morphism in $\ARt$.
We define $M_I(e^{i\theta},e^{i\phi})$ on each basis vector $\vecb_\beta\in M_I(e^{i\theta})$ by
\begin{displaymath}
M_I(e^{i\theta},e^{i\phi})(\vecb_\beta) = \begin{cases}
\vecb_{\beta-\theta+\phi} &  \beta -  \theta + \phi \in E(e^{i\phi}) \\
0 & \text{otherwise}. \end{cases}
\end{displaymath}
In the case $S=\emptyset$, $e^{i\theta}\in R_0$, and $e^{i\phi}\in R_1$:  $M_I(e^{i\theta},e^{i\phi})$ and $M_I(e^{i\phi},e^{i\theta})$ are the compositions $M_I(e^{i\pi},e^{i\phi})\circ M_I(e^{i\theta},e^{i\pi})$  and $M_I(e^0,e^{i\theta})\circ M_I(e^{i\phi},e^0)$, respectively.
A representation isomorphic to $M_I$ is a \textdef{bar}.
A direct sum of bars is a \textdef{bar code}.
\end{definition}

In Definition \ref{def:string indecomposable} the idea behind the linear maps is to send a basis vector in $M_I(e^{i\theta})$ to the ``next'' basis vector in $M_I(e^{i\phi})$.
Replacing $\beta-\theta+\phi$ with $\gamma$ we have the equation $\beta-\gamma = \theta-\phi$ if $\gamma\in E(e^{i\phi})$; this is precisely what we want.

\begin{example}\label{xmp:string}
	We consider the orientation of $\At_\R$ as in Example \ref{xmp:good R} (2).
	\begin{enumerate}
		\item For the interval $\left(0, \frac{3\pi}{2}\right)$ and $\theta\neq \phi\in[0,2\pi]$ where $e^{i\phi}\preceq e^{i\theta}$ we have
			\begin{align*} M_{\left(0, \frac{3\pi}{2}\right)}(e^{i\theta}) &= \begin{cases}k & 0 < \theta < \frac{3\pi}{2} \\ 0 & \text{otherwise} \end{cases} &
			M_{ \left(0, \frac{3\pi}{2}\right) }(e^{i\theta},e^{i\phi}) &= \begin{cases} 1_k & 0 < \theta \leq \phi < \frac{3\pi}{2} \\ 0 & \text{otherwise}. \end{cases} \end{align*} 
			
		\item For the interval $\left[-\pi,\frac{7\pi}{2}\right]$ and $\theta\neq\phi\in (0,2\pi]$ where $e^{i\phi}\preceq e^{i\theta}$ we have 
		  
			\begin{align*} M_{\left[-\pi,\frac{7\pi}{2}\right]}(e^{i\theta}) &= \begin{cases} k^3 & \pi \leq \theta \leq \frac{3\pi}{2} \\ k^2 & \text{otherwise} \end{cases} &
			M_{\left[-\pi,\frac{7\pi}{2}\right]}(e^{i\theta},e^{i\phi}) &= \begin{cases} 1_{k^3} & \pi \leq \theta \leq \phi \leq \frac{3\pi}{2} \\
			    A & \frac{3\pi}{2} = \theta < \phi \leq 2\pi\\ B & \frac{\pi}{2} \leq \theta < \phi = \pi \\ 1_{k^2} & \text{otherwise}. \end{cases} \end{align*}
			   with
			     \begin{align*} A &= {\begin{pmatrix} 1 & 0 & 0 \\ 0 & 1 & 0 \end{pmatrix}} &
		    B &= {\begin{pmatrix} 0 & 0 \\ 1 & 0 \\ 0 & 1 \end{pmatrix}}. \end{align*}
			    A visual depiction is in Figure \ref{fig:string}.

			\begin{figure}	\begin{center}
			\begin{tikzpicture}
			\draw (1,0) arc (0:360:1);
			\filldraw (1,0) circle[radius=.3mm];
			\filldraw (-1,0) circle[radius=.3mm];
			\filldraw (0,-1) circle[radius=.3mm];
			\filldraw (0,1) circle[radius=.3mm];
			\draw[->] (0,1) arc (90:135:1);
			\draw[->] (0,1) arc (90:45:1);
			\draw[->] (0,-1) arc (270: 225:1);
			\draw[->] (0,-1) arc (270:315:1);
			\draw[color=red,thick, domain=-3.14:11,variable=\t,smooth,samples=75] plot ({\t r}: 1.157 + .05*\t);
			\filldraw [color=red] (-1,0) circle[radius=.3mm];
			\filldraw [color=red] (0,-1.707) circle[radius=.3mm];
			
			\filldraw[fill=white!85!red, draw=white!85!red] (4,0) circle[radius=1.6];
			\filldraw[fill=white, draw=white] (4,0) circle[radius=1.2];
			\draw (5,0) arc (0:360:1);
			\filldraw (5,0) circle[radius=.3mm];
			\filldraw (3,0) circle[radius=.3mm];
			\filldraw (4,-1) circle[radius=.3mm];
			\filldraw (4,1) circle[radius=.3mm];
			\draw[->] (4,1) arc (90:135:1);
			\draw[->] (4,1) arc (90:45:1);
			\draw[->] (4,-1) arc (270: 225:1);
			\draw[->] (4,-1) arc (270:315:1);
			
			\foreach \x in {1.2, 1.4, 1.6}
			{
			    \draw[color=red, thick] (4+\x,0) arc (0:360:\x);
			}
			\draw[color=red, thick, rounded corners=5pt] (4, 1.6) arc (90:130:1.6) -- (2.927,0.899) arc (140:180:1.4);
			\draw[color=red, thick, rounded corners=5pt] (4, 1.4) arc (90:130:1.4) -- (3.080, 0.771) arc (140:180:1.2);
			
			\end{tikzpicture}\caption{\dbspace Example \ref{xmp:string} (2) on the left, a bar. Visually, we think of $\xi\left[-\pi,\frac{7\pi}{2}\right]$ as a string wrapping around $\Sp^1$ counterclockwise between 2 and 3 times.
			Example \ref{xmp:band} (2) on the right, a Jordan cell. The crossings indicate the basis vectors combining while elsewhere there is the identity. We think of this as a band.}\label{fig:string}\label{fig:band}
			\end{center}	\end{figure}
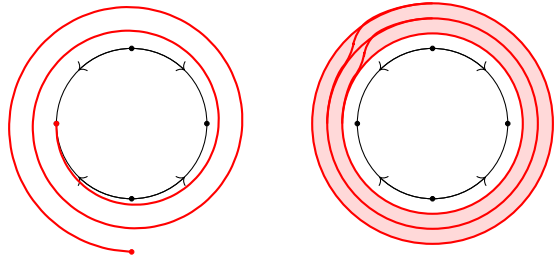
			
		    The basis elements of $M_{\left[-\pi,\frac{7\pi}{2}\right]}(e^{i\theta})$ correspond to the intersections of the string with $e^{i\theta}$ and $M_{\left[-\pi,\frac{7\pi}{2}\right]}(e^{i\theta},e^{i\phi})$ can be considered as the linear map which acts by `sliding' the basis elements down the string. 
		
		\item For the interval $\left(0,2\pi\right]$ and $\phi\neq\theta\in[0,2\pi]$ where $e^{i\phi}\preceq e^{i\theta}$ we have 
			\begin{align*} M_{\left(0,2\pi\right]}(e^{i\theta}) &= k &
			M_{\left(0,2\pi\right]}(e^{i\theta},e^{i\phi}) &= \begin{cases} 1_k & \theta \neq 0 \\ 0 & \theta = 0 \end{cases} \end{align*} 
	\end{enumerate}
\end{example}

\begin{definition}\label{def:hat map}
Let $\ARt$ be a continuous quiver of type $\At$ and let $V$ be a representation of $\ARt$.
Suppose $V(e^{i\theta},e^{i\phi})$ is an isomorphism for each generating morphism $g_{e^{i\theta},e^{i\phi}}$ in $\ARt$.  Recalling the notation in Definition \ref{def:representation}:
\begin{itemize}
    \item If $S=\emptyset$ then define $\Vhatup = V(e^{i\alpha_1},e^{i\alpha_2})$ and $\Vhatdown = V(e^{i\alpha_0},e^{i\alpha_1})^{-1}$.
    \item If $S\neq\emptyset$ then define $\Vhatup = V(e^{i\alpha_{|S|-1}},e^{i\alpha_{|S|}})$ and
    \begin{displaymath}
        \Vhatdown = V(e^{i\alpha_1},e^{i\alpha_0}) \circ V(e^{i\alpha_1},e^{i\alpha_2})^{-1}
            \circ \cdots \circ V(e^{i\alpha_{|S|-3}},e^{i\alpha_{|S|-2}})^{-1} \circ V(e^{i\alpha_{|S|-1}},e^{\alpha_{|S|-2}})
    \end{displaymath}
\end{itemize}
In both cases, define $\Vhat = \Vhatup \circ (\Vhatdown^{-1})$.
Note that if $S=\emptyset$ then $\Vhat = V(\omega_{e^0})$.
\end{definition}

\begin{definition}\label{def:band indecomposable}
Let $\ARt$ be a continuous quiver of type $\At$.
Let $V$ be a representation of $\ARt$ such that the following hold:
\begin{itemize}
\item There exists $d\geq 1$ such that $\dim V(x)=d$ for all $x\in \Sp^1$.
\item For all generating morphisms $g$, $V(g)$ is an isomorhism.
\item Let $x_0=e^0$ if $S=\emptyset$ and $x_0=s_0$ if $S\neq\emptyset$.
If $V(x_0)\cong U\oplus W$, where $U$ and $W$ are invariant subspaces under $\Vhat$, then either $U=0$ or $W=0$.
\end{itemize}
Then we call $V$ a \textdef{Jordan cell}.
\end{definition}

\begin{remark}\label{rmk:Jordan cells}
The definition of a Jordan cell is meant to be a continuous version of band representations of type $\At_n$ quivers.
The isomorphism class of a Jordan cell is completely determined by the map $\Vhat$ (see Theorem \ref{thm:isomorphism classes}). 
In particular, by a result of Kronecker, when $k$ is algebraically closed one can change the bases of $V(e^{i\alpha_0})$ and $V(e^{i\alpha_{|S|-1}})$ such that $\Vhatdown$ is the identity and $\Vhatup$ is a Jordan block.
This is why Burghelea and Dey called similar circular persistence modules `Jordan cell representations'.
\end{remark}

\begin{example}\label{xmp:band}
 We consider the orientation of $\At_\R$ as in Example \ref{xmp:good R} (2).
	\begin{enumerate}
		\item Let $\rho \in k^*$ and $\theta\neq\phi\in[0,2\pi]$ where $e^{i\phi}\preceq e^{i\theta}$. The representation $U$ defined by
		\begin{align*} U(e^{i\theta}) &= k &
		U(e^{i\theta},e^{i\phi}) &= \begin{cases}\rho & 0 \leq \phi < \theta = \frac{\pi}{2}\\1_k & \text{otherwise}\end{cases}\end{align*}
		is a 1-dimensional Jordan cell. In this case, we have $\widehat{U} = \rho^{-1}$.
		
		\item Denote by \begin{displaymath}A = {\begin{pmatrix} 1 & 1 & 0 \\ 0 & 1 & 1 \\ 0 & 0 & 1 \end{pmatrix}}.\end{displaymath}
		The representation $V$ defined by, for $\theta\neq\phi\in[0,2\pi]$ where $e^{i\phi}\preceq e^{i\theta}$, \begin{align*} V(e^{i\theta}) &= k^3 &
		V(e^{i\theta},e^{i\phi}) &= \begin{cases} A & \frac{\pi}{2} =\theta < \phi \leq \pi \\ 1_{k^3} & \text{otherwise}. \end{cases} \end{align*}
		is a 3-dimensional Jordan cell.
		In this case $\Vhat = A$. See Figure \ref{fig:band} for a visualization of this example. 
		
		\item Let $\rho,\sigma \in k^*$. Denote by
		\begin{align*} A &= {\begin{pmatrix} \rho & 0 & 0 \\ 1 & \rho & 0 \\ 0 & 1 & \rho \end{pmatrix}} &
		B &= {\begin{pmatrix} \sigma & 0 & 0 \\ 0 & 1 & 0 \\ 0 & 0 & -\sigma \end{pmatrix}}\end{align*}
		The representation $W$ defined by, for $\theta\neq\phi\in[0,2\pi]$ where $e^{i\phi},e^{i\theta}$,
		\begin{align*} 	W(e^{i\theta}) &= k^3 &
		W(e^{i\theta},e^{i\phi}) &=  \begin{cases} A & \frac{\pi}{2} \leq \theta < \phi = \pi \\ B & \pi \leq \phi < \theta = \frac{3\pi}{2} \\ 1_{k^3} & \text{otherwise}\end{cases}  \end{align*}
		is a 3-dimensional Jordan cell.
		In this case $\widehat{W} = B^{-1}A$.
	\end{enumerate}
\end{example}

The following shows that the isomorphism classes of bars and Jordan cells can be described in an analogous way to the isomorphism classes of finite-dimensional indecomposable type $\At_n$ representations.
\begin{theorem}\label{thm:isomorphism classes}
    Let $V$ and $W$ be representations of $\ARt$.
    \begin{enumerate}
        \item If $V\cong M_I$ and $W\cong M_J$ are bars then $V\cong W$ if and only if there exists an integer $n$ and bijection $I\to J$ given by $x\mapsto x+2n\pi$.
        \item If $V$ and $W$ are Jordan cells then $V\cong W$ if and only if there exists an isomorphism $A:V(e^{i\alpha_0})\to W(e^{i\alpha_0})$ such that $\widehat{V}=A^{-1} \widehat{W} A$.
        \item If $V$ is a bar and $W$ is a Jordan cell then $V\not\cong W$.
    \end{enumerate}
\end{theorem}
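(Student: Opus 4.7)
The plan is to prove the three statements in turn.

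For part (1), the $(\Leftarrow)$ direction is constructive: a bijection $I\to J$ of the form $\beta\mapsto\beta+2n\pi$ induces, at each $e^{i\theta}$, a bijection of standard basis sets $\mathcal B_I(e^{i\theta})\to\mathcal B_J(e^{i\theta})$ and hence a linear isomorphism $f(e^{i\theta})\colon M_I(e^{i\theta})\to M_J(e^{i\theta})$. Naturality on a generating morphism $g_{e^{i\theta},e^{i\phi}}$ is immediate from the formula $\vecb_\beta\mapsto\vecb_{\beta-\theta+\phi}$, since the shift by $2n\pi$ commutes with the shift by $\phi-\theta$. For the $(\Rightarrow)$ direction, given an isomorphism $f\colon M_I\cong M_J$, I first use that the dimension functions $\dim M_I(e^{i\theta})=|E_I(e^{i\theta})|$ and $\dim M_J(e^{i\theta})=|E_J(e^{i\theta})|$ agree for every $\theta$, which forces $I$ and $J$ to share the same length and endpoint positions modulo $2\pi$. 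To extract the integer $n$, I fix any $\beta_0\in I$ and track the ``thread'' generated by $\vecb_{\beta_0}$: its images under successive generating morphisms of $M_I$ form a coherent family supported exactly on $\xi(I)$. Naturality of $f$ sends this thread to a corresponding thread in $M_J$, which is pinned down by a unique $\beta_0'\in J$ with $\xi(\beta_0')=\xi(\beta_0)$, forcing $\beta_0'-\beta_0\in 2\pi\Z$. Connectedness of $I$ together with naturality then forces the same shift $n$ to work for every $\beta\in I$.

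For part (2), the $(\Rightarrow)$ direction is a direct consequence of naturality: given $f\colon V\cong W$, set $A=f(e^{i\alpha_0})$. Because $\Vhat$ and $\widehat{W}$ are defined as compositions of the same sequence of generating morphisms applied to $V$ and $W$ respectively, naturality of $f$ on each factor gives $A\circ\Vhat=\widehat{W}\circ A$, i.e.\ $\Vhat=A^{-1}\widehat{W}A$. For the $(\Leftarrow)$ direction, I set $f(e^{i\alpha_0})=A$ and propagate to each other $e^{i\theta}$ by choosing a chain of generating morphisms from $e^{i\alpha_0}$ to $e^{i\theta}$ and defining $f(e^{i\theta})=W(\text{chain})\circ A\circ V(\text{chain})^{-1}$; all such maps are invertible by the Jordan cell hypothesis. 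The condition $\Vhat=A^{-1}\widehat{W}A$ is precisely the compatibility ensuring that going around $\Sp^1$ returns the same value $f(e^{i\alpha_0})=A$, so the construction does not depend on the chosen chain. Naturality on all generating morphisms is then immediate from the construction.

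For part (3), the distinguishing feature is that a Jordan cell has $V(g)$ invertible for every generating morphism $g$, while for a bar $M_I$ with bounded $I$ this fails. Indeed, if all generating morphisms of $M_I$ were invertible then so would the corresponding composition going once around $\Sp^1$ (analogous to $\Vhat$); but this composition shifts every basis vector $\vecb_\beta$ by $\pm 2\pi$, which carries the top basis vector (the one with $\beta$ closest to $\sup I$) outside of $I$ and so cannot be invertible---a contradiction. Since any isomorphism $f\colon V\cong W$ satisfies $W(g)=f(y)\circ V(g)\circ f(x)^{-1}$, the property of having all generating morphisms invertible is preserved by isomorphism, so a bar cannot be isomorphic to a Jordan cell.

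The main obstacle is the $(\Rightarrow)$ direction of part (1): an abstract isomorphism $f(e^{i\theta})$ need not send standard basis vectors to standard basis vectors, and the threads of $M_I$ all meet at each point. The argument must make the thread decomposition intrinsic to the representation---detectable, for instance, via ranks of compositions $M_I(e^{i\theta_k},e^{i\theta_{k+1}})\circ\cdots\circ M_I(e^{i\theta_0},e^{i\theta_1})$ along paths in $\Sp^1$ that terminate near an endpoint of $I$ modulo $2\pi$---so as to conclude that $f$ preserves the thread decomposition and therefore arises from a single shift $\beta\mapsto\beta+2n\pi$.
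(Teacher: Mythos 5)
Your overall approach matches the paper's. Parts (2) and (3) track the paper essentially step for step: in (2), naturality of an isomorphism $f$ along the cycle gives $f(e^{i\alpha_0})\widehat{V} = \widehat{W}f(e^{i\alpha_0})$, and conversely one propagates $A$ along a chain of (invertible) structure maps, the conjugation condition ensuring consistency after a full loop; in (3), every structure map of a Jordan cell is invertible, this property is an isomorphism invariant, and a bar fails it.

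Part (1) is where the real difficulty lies, and you are actually more careful here than the paper. The paper's forward direction is purely a dimension count: it asserts that if no shift bijection $I \to J$ exists then $|E_I(x)| \neq |E_J(x)|$ for some $x$. That assertion is false. With $S = \emptyset$, take $I = [0,2\pi)$ and $J = (0,2\pi]$: then $|E_I(e^{i\theta})| = |E_J(e^{i\theta})| = 1$ for every $\theta$, no shift $x \mapsto x+2n\pi$ carries $I$ onto $J$, and yet $M_I \not\cong M_J$ (for instance $M_I(e^{i\pi},e^0) = 0$ while $M_J(e^{i\pi},e^0)$ is an isomorphism, and conversely for $e^0 \to e^{i\pi}$). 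You correctly note that dimension counting only recovers the length of the interval and its endpoint angles modulo $2\pi$, leaving the open/closed pattern at the endpoints and the integer $n$ undetermined, and your thread-tracking idea is the right sort of additional invariant. But you also flag, accurately, that your sketch does not resolve the core obstacle: an abstract isomorphism $f$ need not respect the chosen basis, so the ``threads'' are not manifestly preserved. One way to finish would be to note that the ranks of $M_I(g)$ for each generating morphism $g$, and of iterated compositions winding several times around the circle, are isomorphism invariants that detect precisely where basis vectors die; these recover the open/closed data of $I$ and determine $n$. As written, the forward implication in your part (1) --- and, notably, in the paper's own proof --- is not complete.
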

\begin{proof}
    (1) Suppose $V\cong M_I$ and $W\cong M_J$ are bars.
    First assume there exists $n\in\Z$ such that $x\mapsto x+2n\pi$ is a bijection $I\to J$.
    For each $x\in \Sp^1$, let $E_I(x)$ and $E_J(x)$ be as in Definition \ref{def:I and E} for $I$ and $J$, respectively.
    Then, for each $x\in\Sp^1$, we have an order preserving bijection $E_I(x)\to E_J(x)$ which induces an order preserving bijection $\widetilde{f}(x):\mathcal B_I(x) \to \mathcal B_J(x)$.
    This induces isomorphisms $f(x):V(x)\to W(x)$ such that $f(y)\circ V(x,y)=W(x,y)\circ f(x)$, yielding an isomorphism $f:V\to W$.
    
    If there is no bijection $I\to J$ as described then there exists $x\in \Sp^1$ such that $|E_I(x)|\neq |E_J(x)|$.
    Then $\dim V(x)\neq \dim W(x)$ and so $V\ncong W$.
    
    (2) Suppose $V\cong W$ are Jordan cells.
    First suppose $S\neq \emptyset$; i.e., $\ARt$ is not cyclic.
    Let $f:V\to W$ be an isomorphism and $A=f(e^{i\alpha_0}) = f(s_0)$.
    Noting that there then exists an inverse $f^{-1}:W\to V$ we see that
    \begin{align*}
       A \widehat{V} &= f(s_{|S|}) \circ V(s_{|S|-1},s_{|S|}) \circ V(s_{|S|-1},s_{|S|-2})^{-1} \circ \cdots \circ V(s_1,s_2) \circ V(s_1,s_0)^{-1} \\
       &=W(s_{|S|-1},s_{|S|})\circ f(s_{|S|-1}) \circ V(s_{|S|-1},s_{|S|-2})^{-1} \circ \cdots \circ V(s_1,s_2) \circ V(s_1,s_0)^{-1} \\
       &=W(s_{|S|-1},s_{|S|})\circ W(s_{|S|-1},s_{|S|-2})^{-1} \circ f(s_{|S|-2}) \circ \cdots \circ V(s_1,s_2) \circ V(s_1,s_0)^{-1} \\
       &\phantom{=W(s_{|S|-1},s_{|S|})\circ W(s_{|S|-1},s_{|S|-2})^{-1}\circ}\vdots \\
       &= W(s_{|S|-1},s_{|S|}) \circ W(s_{|S|-1},s_{|S|-2})^{-1} \circ \cdots \circ W(s_1,s_2) \circ f(s_1) \circ V(s_1,s_0)^{-1} \\
       &= W(s_{|S|-1},s_{|S|}) \circ W(s_{|S|-1},s_{|S|-2})^{-1} \circ \cdots \circ W(s_1,s_2) \circ V(s_1,s_0)^{-1} \circ f(s_0) \\
       &= \widehat{W} A
    \end{align*}
    Thus $\widehat{V} = A^{-1} \widehat{W} A$.
    
    Likewise, if we assume that we are given $A$ such that $\widehat{V} = A^{-1} \widehat{W} A$ we may choose each of the $f(s_i)$'s satisfying the equations above. When $\ARt$ is acyclic, consider the following diagram:
    %
    \begin{displaymath}\xymatrix@C=10ex{
    V(0) \ar[d]_-A \ar[r]^-{V(s_1,s_0)^{-1}} & V(s_1) \ar[r]^-{V(s_1,s_2)} \ar@{-->}[d]_-{f(s_1)} & \cdots \ar[r] & V(s_{|S|-2}) \ar@{-->}[d]^-{f(s_{|S|-2})} \ar[r]^-{V(s_{|S|-1},s_{|S|-2})^{-1}} & V(s_{|S|-1}) \ar@{-->}[d]^-{f(s_{|S|-1})} \ar[r]^-{V(s_{|S|-1},s_{|S|})} & V(s_{|S|}) \ar[d]^-A \\
    W(s_0) \ar[r]_-{W(s_1,s_0)^{-1}} & W(s_{1}) \ar[r]_-{W(s_1,s_2)} & \cdots \ar[r] & W(s_{|S|-2}) \ar[r]_-{W(s_{|S|-1},s_{|S|-2})^{-1}} & W(s_{|S|-1}) \ar[r]_-{W(s_{|S|-1},s_{|S|})} & W(s_{|S|}).
    }\end{displaymath}
    For the first vertical dashed arrow set $f(s_1)=W(s_1,s_0)^{-1}\circ A \circ V(s_1,s_0)$ and observe:
    \begin{align*}
    W(s_{|S|-1},s_{|S|}) \circ \cdots\circ W(s_1,s_2)\circ f(s_1) &= \underbrace{W(s_{|S|-1},s_{|S|}) \circ \cdots \circ W(s_1,s_0)^{-1}\circ A}_{\widehat{W} A} \circ V(s_1,ss_0) \\
    &= \underbrace{A \circ V(s_{|S|-1},s_{|S|}) \circ \cdots\circ V(s_1,s_0)^{-1}}_{A \widehat{V}} \circ V(s_1,s_0) \\
    &= A \circ V(s_{|S|-1},s_{|S|}) \circ \cdots \circ V(s_1s_2).
    \end{align*}
    One may now use a similar argument for $f(s_2)$ and all $x\in R_0$.
    After finitely-many iterations one obtains an isomorphism $f:V\to W$.
    
    Both arguments hold when $\ARt$ is cyclic by replacing $V(s_1,s_0)^{-1}$ and $V(s_1,s_2)$ with $V(e^{i\alpha_0},e^{i\alpha_1})$ and $V(e^{i\alpha_1},e^{i\alpha_0})$, respectively, and performing a similar replacement for $W$.
    
    (3) Since $V$ is a bar, there exists $y\preceq x$ in $\Sp^1$ such that $V(x,y)$ is not an isomorphism.
    Since $W(x,y)$ must be an isomorphism there cannot exist an isomorphism $V\to W$ or $W\to V$.
\end{proof}

\section{Refinement and Representation Lifting}
In this section we describe how to refine and lift certain kinds of representations of a continuous quiver of type $\At$.
The refinements provide an easier version to work with and the lift is a representation of a type $\At_n$ quiver.

Note, for a continuous type $\At$ quiver, each $\overline{R}_n$ and $R_n$ (from Definition \ref{def:the R's}) is totally ordered with the ordering inherited from $\ARt$.

\begin{remark}\label{rmk:restriction decomposition}
Recall Crawley-Boevey proved a pwf representation of $\R$ decomposes uniquely up to isomorphism into a bar code \cite{C-B}. Moreover, each bar is indecomposable.
For each $R_n\in \mathcal R$ consider $\overline{R}_n$ as a closed interval subset of $\R$.
Thus, since $\preceq$ on $\overline{R}$ is a total order, the restriction $V|_{\overline{R}_n}$ of a pwf representation $V$ of a continuous quiver $\ARt$ decomposes uniquely up to isomorphism into a bar code.
\end{remark}

\begin{definition}\label{def:finitistic}
Let $\ARt$ be a continuous quiver of type $\At$ and $V$ a pwf representation of $\ARt$.
We call $V$ a \textdef{finitistic} representation if for all $R_n\in \mathcal R$, the support of each bar in the bar code decomposition of $V|_{\overline{R}_n}$ has an element in $\overline{R}_n\setminus R_n$.
\end{definition}

\begin{definition}\label{def:the partition}
Let $\ARt$ be a continuous quiver of type $\At$ and $V$ a finitistic representation of $\ARt$.
Write $V|_{\overline{R}_n} = \bigoplus_{i=1}^{m_n} (A_{n,i})^{j_i}$ where each $A_{n,i}$ is a bar and $j_i$ is its multiplicity in the sum.
By Definition \ref{def:finitistic} each $A_{n,i}$ has support in $\overline{R}_n\setminus R_n$.

We define subintervals of $\overline{R}_n$ for each $A_{n,i}$.
If $A_{n,i}$ does not have support at $\alpha_{n+1}$ let $J_n^i=\supp A_{n,i}$.
If $A_{n,i}$ has support at $\alpha_{n+1}$ let $J_n^i = \overline{R}_n\setminus (\supp A_{n,i})$, which may be empty.
We let $J_n = \{J_n^i: J_n^i\neq \emptyset\}$.
Note that if $J_n^i = J_n^{i'}$ then there is only one copy in $J_n$.

If $J_n = \emptyset$ then we construct a partition $\mathcal P_n$ of $\overline{R}_n$ as follows.
We let $P_n^1$ be the closed subinterval $[\alpha_n,\frac{\alpha_n+\alpha_{n+1}}{2}]$ of $\overline{R}_n$.
Then let $P_n^2 = \overline{R}_n \setminus P_n^1$ and $\mathcal P_n = \{P_n^1,P_n^2\}$.

If $J_n\neq\emptyset$ we perform an algorithm to construct a partition $\mathcal P_n$ of $\overline{R}_n$.
Notice $J_n$ is finite and totally ordered by inclusion.
Set $\ell =0$, $J_n^0 = J_n$, and $P_n^0 = \emptyset$.
\begin{enumerate}
\item Let $P_n^{\ell+1} = (\min J_n^\ell) \setminus (\bigcup_{i=0}^\ell P_n^i)$.
\item Let $J_n^{\ell+1} = J_n^\ell \setminus \{\min J_n^\ell\}$.
\item Replace $\ell$ with $\ell+1$.
\item (i) If $J_n^\ell=\emptyset$ then set $P_n^{\ell+1} = \overline{R}_n \setminus (\bigcup_{i=0}^\ell P_n^i)$ and $\mathcal P_n= \{P_n^i : 0 < i \leq \ell +1\}$. Then exit the algorithm.
(ii) If $J_n^\ell\neq \emptyset$ then return to step 1.
\end{enumerate}

Denote by $\overline{R}_{|S|}$ and $R_{|S|}$ the sets $\overline{R}_0$ and $R_0$, respectively.
Similarly, denote by $\mathcal P_{|S|}$ and $P_{|S|}^i$, for $1\leq i \leq |\mathcal P_0|$, the sets $\mathcal P_0$ and $P_0^i$, respectively.
Our partition is the collection of $P_n^i$'s, taking the union where the ends overlap:
\begin{displaymath}
\mathcal P_V = \{P_n^i: 0\leq n < |S|,\, 1 < i < |\mathcal P_n|\} \cup \{P_n^{|\mathcal P_n|} \cup P_{n+1}^1 : 0 \leq n <|S|\}.
\end{displaymath}
Note that if $S\neq\emptyset$ then the unique $s\in P_n^{|\mathcal P_n|} \cap P_{n+1}^1$ is in $S$.
\end{definition}

\begin{proposition}\label{prop:the partition}
Let $\ARt$ be a continuous quiver of type $\At$ and $V$ a finitistic representation.
Let $\mathcal P_V$ be as in Definition \ref{def:the partition}.
Then $\mathcal P_V$ is indeed a partition of $\Sp^1$ and each $P\in\mathcal P_V$ is path-connected.
\end{proposition}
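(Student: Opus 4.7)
The plan is to reduce the proposition to understanding the shape of the sets $J_n^i$, from which both the partition property and path-connectedness follow routinely. First I would establish a structural lemma: every non-empty $J_n^i$ from Definition \ref{def:the partition} is a subinterval of $\overline{R}_n$ that is closed at $\alpha_n$ and does not contain $\alpha_{n+1}$. By Remark \ref{rmk:restriction decomposition} each bar $A_{n,i}$ in the decomposition of $V|_{\overline{R}_n}$ has interval support, and Definition \ref{def:finitistic} forces $\supp A_{n,i}$ to meet $\{\alpha_n,\alpha_{n+1}\}$. Splitting cases: if $\alpha_{n+1}\notin\supp A_{n,i}$ then $J_n^i=\supp A_{n,i}$ is an interval containing $\alpha_n$ and missing $\alpha_{n+1}$; if $\alpha_{n+1}\in\supp A_{n,i}$ then convexity forces the support to be of the form $(c,\alpha_{n+1}]$ or $[c,\alpha_{n+1}]$ for some $c\in[\alpha_n,\alpha_{n+1}]$, so the complement in $\overline{R}_n$ is either empty (hence discarded) or an interval with $\alpha_n$ as closed left endpoint that misses $\alpha_{n+1}$.

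Given the lemma, the set $J_n$ consists of subintervals of $\overline{R}_n$ sharing the common closed left endpoint $\alpha_n$, hence is totally ordered by strict inclusion; enumerate $J_n=\{I_1\subsetneq\cdots\subsetneq I_m\}$. A short induction on the algorithm of Definition \ref{def:the partition} shows $P_n^\ell = I_\ell\setminus I_{\ell-1}$ for $1\leq\ell\leq m$ (with $I_0=\emptyset$) and $P_n^{m+1}=\overline{R}_n\setminus I_m$. Each ``shell'' $I_\ell\setminus I_{\ell-1}$ is a non-empty subinterval, and $P_n^{m+1}$ is a non-empty subinterval containing $\alpha_{n+1}$. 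Disjointness of the $P_n^i$'s and coverage of $\overline{R}_n$ are then immediate, so $\mathcal P_n$ partitions $\overline{R}_n$ into subintervals; the degenerate case $J_n=\emptyset$ is handled directly by the explicit $\{P_n^1,P_n^2\}$. In every case, $\alpha_n\in P_n^1$ and $\alpha_{n+1}\in P_n^{|\mathcal P_n|}$.

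Finally, I would assemble $\mathcal P_V$. Interior pieces $P_n^i$ with $1<i<|\mathcal P_n|$ lie in the open region $R_n$, so they are disjoint from any piece associated with $\overline{R}_m$ for $m\neq n$. Each merged piece $P_n^{|\mathcal P_n|}\cup P_{n+1}^1$ is a union of two subintervals of adjacent regions meeting precisely at $\{e^{i\alpha_{n+1}}\}$, hence a path-connected arc of $\Sp^1$; distinct merged pieces are disjoint since they surround distinct boundary points. Together the interior and merged pieces cover $\Sp^1=\bigcup_n\overline{R}_n$ without overlap, giving the partition, and each piece is path-connected as a subarc or a union of two subarcs joined at a common endpoint. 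The main obstacle is the structural lemma on the shape of the $J_n^i$; once it is in hand, the rest is bookkeeping around the algorithm.
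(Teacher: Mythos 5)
Your proof is correct and follows the same overall route as the paper's, but it is substantially more detailed in a way that fills a real gap in the paper's exposition. The paper's proof simply \emph{asserts} that each $P_n^i$ is path-connected and that the $P_n^i$'s partition $\overline{R}_n$, without explaining why the algorithm in Definition~\ref{def:the partition} produces intervals; you actually prove this via your structural lemma on the $J_n^i$'s. Your observation that every nonempty $J_n^i$ is an interval with $\alpha_n$ as a closed left endpoint that omits $\alpha_{n+1}$ is exactly what makes $J_n$ a chain under inclusion (a fact the paper also asserts in Definition~\ref{def:the partition} without proof), shows the algorithm produces nested ``shells,'' and forces $\alpha_n\in P_n^1$ and $\alpha_{n+1}\in P_n^{|\mathcal P_n|}$, which is what makes the interior pieces live in the open arc $R_n$ and the merged pieces meet in exactly one point. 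One small point you could tighten: for distinct merged pieces indexed by $m$ and $n=m+1$, both have a summand inside $\overline{R}_{m+1}$, namely $P_{m+1}^1$ and $P_{m+1}^{|\mathcal P_{m+1}|}$, so disjointness also requires noting $|\mathcal P_{m+1}|\geq 2$, which your shell analysis and the degenerate case $J_n=\emptyset$ both guarantee. Aside from that minor omission (and some conflation of angles $\alpha_n$ with points $e^{i\alpha_n}$), your argument is sound and, if anything, more careful than the proof in the paper.
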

\begin{proof}
For each $x\in \Sp^1$ there is $\overline{R}_n$ such that $x\in \overline{R}_n$.
Then there is a $P_n^i$ such that $x\in P_n^i$ and thus a $P\in \mathcal P_V$ such that $x\in P$.

Consider $P_m^i,P_n^j\in \mathcal P_V$.
If $m\neq n$ or $i\neq j$ then $P_m^i\cap P_n^j=\emptyset$.
For a fixed $n$ and $P_n^i\in\mathcal P_V$ we see $P_n^i\cap P_n^1=\emptyset$ and $P_n^i\cap P_n^{|\mathcal P|}=\emptyset$.
Now consider
\begin{displaymath} \left( P_m^{|\mathcal P_m|}\cup P_{m+1}^1\right) \cap \left( P_n^{|\mathcal P_n|}\cup P_{n+1}^1 \right).
\end{displaymath}
If $m\neq n$ this intersection is empty.
Thus, if $P,P'\in \mathcal P_V$ and $P\neq P'$ then $P\cap P'=\emptyset$.

Finally, each $P_n^i$ is path-connected and $P_n^{|\mathcal P_n|}\cap P_{n+1}^0$ contains exactly one element.
Thus, each $P\in \mathcal P_V$ is path-connected.
\end{proof}

We will often reuse the hypotheses in the preceding proposition as ``the setup in Proposition \ref{prop:the partition}.''

\begin{definition}\label{def:start and end}
Consider the setup in Proposition \ref{prop:the partition}.
For $P \in \mathcal P$ write $\partial P = \{e^{i\beta},e^{i\gamma}\}$ so that $0 \leq \beta < 2\pi$ and $0 \leq \gamma-\beta < 2\pi$. We denote $St(P) = e^{i\beta}$ the \textdef{start} of $P$ and $En(P) = e^{i\gamma}$ the \textdef{end} of $P$.
We denote $St_\measuredangle(P)=\beta$ and $En_\measuredangle(P)=\gamma$.
\end{definition}

\begin{definition}\label{def:representative points}
Consider the setup in Proposition \ref{prop:the partition}.
\begin{itemize}
\item If $P=P_n^i$ then let $\delta_P=\frac{1}{2}(St_\measuredangle(P)+En_\measuredangle(P))$ and $x_P=e^{i\delta_P}$.
\item If $P=P_n^{|\mathcal P_n|} \cup P_{n+1}^1$ then let $x_P$ be the unique point in $P_n^{|\mathcal P_n|} \cap P_{n+1}^1$.
\end{itemize}
We call each $x_P\in P$ the \textdef{representative point of $P$}.
\end{definition}

\begin{definition}\label{def:refinement}
Consider the setup in Proposition \ref{prop:the partition}.
Let $U$ be the following representation of $\ARt$:
\begin{align*}
U(x) &= V(x_P) \text{ where }x\in P \\ U(x,y) &= V(x_P,x_{P'}) \text{ where }x\in P, y\in P'.
\end{align*}
Note if $x,y \in P$ then $U(x,y)$ is the identity.
We call $U$ the \textdef{refinement} of $V$.
\end{definition}

\begin{proposition}\label{prop:refinement}
Let $\ARt$ be a continuous quiver of type $\At$, $V$ a finitistic representation of $\ARt$, and $U$ the refinement of $V$.
Then $U\cong V$.
\end{proposition}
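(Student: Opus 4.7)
The plan is to construct an explicit natural isomorphism $f\colon V\to U$. For each $x\in\Sp^1$, I will write $P(x)\in\mathcal P_V$ for the unique part containing $x$ (Proposition \ref{prop:the partition}) and $x_P$ for its representative point (Definition \ref{def:representative points}), so that $U(x)=V(x_P)$ by construction.

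The key step will be to show that for any $x,y$ in a common part $P\in\mathcal P_V$ with a generating morphism $g_{x,y}$, the map $V(x,y)$ is an isomorphism. First I would handle the case $P=P_n^i\subseteq\overline{R}_n$. Every nonempty $J_n^j$ from Definition \ref{def:the partition} contains $\alpha_n$: the finitistic hypothesis forces each bar of $V|_{\overline{R}_n}$ to meet $\{\alpha_n,\alpha_{n+1}\}$, and the case split in the definition of $J_n^j$ then produces an interval with $\alpha_n$ as an endpoint. Hence the $J_n^j$'s are totally ordered by inclusion, and the algorithm produces $P_n^1,\ldots,P_n^{|\mathcal P_n|}$ with the property that the support of each bar $A_{n,j}$ is a union of consecutive $P_n^\ell$'s. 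Consequently, every bar in the decomposition of $V|_{\overline{R}_n}$ is either identically $k$ with identity transition maps on $P_n^i$ or vanishes there, so every generating morphism within $P_n^i$ is sent by $V$ to an isomorphism. If instead $P=P_n^{|\mathcal P_n|}\cup P_{n+1}^1$ straddles $s_{n+1}\in S$, then since $s_{n+1}$ is a sink or source, there is no generating morphism between points of $P_n^{|\mathcal P_n|}\setminus\{s_{n+1}\}$ and $P_{n+1}^1\setminus\{s_{n+1}\}$, so every generating morphism within $P$ lies entirely in $\overline{R}_n$ or entirely in $\overline{R}_{n+1}$ and reduces to the previous case.

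With this in hand, for each $x\in P$ the points $x$ and $x_P$ are comparable in the order on the relevant $\overline{R}_n$, and I will define
\[
f(x)=\begin{cases} V(x,x_P) & \text{if } x_P\preceq x,\\ V(x_P,x)^{-1} & \text{if } x\preceq x_P,\end{cases}
\]
which is an isomorphism by the key step. To check naturality on a generating morphism $g_{x,y}$, I will factor $V(x,y)$ through representative points using the identity $V(z,y)\circ V(x,z)=V(x,y)$ for $x\succeq z\succeq y$ in a single $\overline{R}_n$. When $P(x)=P(y)=P$, inserting $x_P$ between $x$ and $y$ and using the case analysis on the relative position of $x_P$ gives $f(y)\circ V(x,y)=f(x)=U(x,y)\circ f(x)$, since $U(x,y)$ is the identity on $V(x_P)$. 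When $P(x)\neq P(y)$, inserting both $x_{P(x)}$ and $x_{P(y)}$ produces $f(y)\circ V(x,y)=V(x_{P(x)},x_{P(y)})\circ f(x)=U(x,y)\circ f(x)$.

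The main obstacle will be verifying the key step, particularly keeping track of the open/closed boundary conventions of the bars so that one genuinely has $V|_{P_n^i}$ constant up to identity transition maps; the naturality check itself is essentially a bookkeeping exercise once the key step is secured.
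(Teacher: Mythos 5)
Your proposal is correct and follows essentially the same route as the paper: both reduce the claim to showing that $V(x,y)$ is an isomorphism whenever $x,y$ lie in a common $P\in\mathcal P_V$, then build the pointwise isomorphism out of $V(x,x_P)$ and $V(x_P,x)^{-1}$ and check naturality against the generating morphisms. The only cosmetic difference is the direction of the morphism (you build $f\colon V\to U$, the paper builds $f\colon U\to V$), and you spell out in more detail why the nested $J_n^j$'s all contain $\alpha_n$ — a fact the paper leaves implicit in the remark that $J_n$ is totally ordered by inclusion.
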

\begin{proof}
First, let $y\preceq x\in P_n^i$ for some $P_n^i\in \mathcal P_n$.
Then $x,y\in \overline{R}_n$ for some $R_n\in\mathcal R$.
By definition, $x\in\supp A_{n,j}$ if and only if $y\in \supp A_{n,j}$ where the $A_{n,j}$'s are the distinct bar summands of $V|_{\overline{R}_n}$.
Thus, $V(x,y)$ is an isomorphism.
Therefore, if $y\preceq x\in P$ for $P\in\mathcal P_V$ then $V(x,y)$ is an isomorphism.

Let $x\in \Sp^1$ and $P\in\mathcal P_V$ such that $x\in P$.
Define $f(x):U(x_P)=U(x)\to V(x)$ to be $V(x_P,x)$ if $x\preceq x_P$ and to be $V(x,x_P)^{-1}$ if $x_P\preceq x$.
Thus, the following diagram is always commutative:
\begin{displaymath}\xymatrix{
U(x) \ar[d]_-{U(x,y)} \ar@{=}[r] & U(x_P) \ar[d]_-{V(x_P,x_{P'})} \ar[r]^-{f(x)} & V(x) \ar[d]^-{V(x,y)} \\
U(y) \ar@{=}[r] & U(x_{P'}) \ar[r]_-{f(y)} & V(y).
}\end{displaymath}
Therefore, the collection $\{f(x)\}$ yields a morphism $f:U\to V$ and each $f(x)$ is an isomorphism.
By Remark \ref{rmk:isomorphism} we see $f$ is an isomorphism.
\end{proof}

We now briefly recall the definitions of quivers and their representations. Readers are referred to \cite[Chapter II,III]{ASS} for precise details.

\begin{definition}\label{def:quiver stuff}
A \textdef{quiver} $Q=(Q_0,Q_1)$ is a set of vertices $Q_0$ and arrows $Q_1$ (both typically finite) whose source and target are in $Q_0$.
I.e., $Q$ is a directed graph where loops and cycles are allowed. If the underlying undirected graph of $Q$ is a cycle then $Q$ is of type $\At_{|Q_0|-1}$.

A \textdef{representation} $M$ of $Q$ assigns to each vertex $i$ in $Q_0$ a $k$-vector space $M(i)$ and to each arrow $a:i\to j$ in $Q_1$ a $k$-linear transformation $M(a):M(i)\to M(j)$.
\textdef{Morphisms}, \textdef{isomorphisms}, and \textdef{direct sums} of representations of the same quiver are defined similarly to those over $\ARt$.
\end{definition}

\begin{convention}\label{cnv:finite quivers} Let $Q=(Q_0,Q_1)$ be an $\At_n$ quiver.
For convenience, enumerate the vertices counterclockwise $0,1,\ldots,n,n+1$ where $n+1$ is another name for $0\in Q_0$.
If $Q$ has two vertices then it has two arrows.
In this case we always assume that we have fixed one to be between 0 and 1 and the other between 1 and 2, thus moving `around' the underlying cycle.
If $Q$ is an auxiliary quiver (see Definition \ref{def:auxiliary quiver}) we follow the cycle from the continuous type $\At$ quiver.
\end{convention}

\begin{definition}\label{def:auxiliary quiver}
Consider the setup in Proposition \ref{prop:the partition}.
We define an auxiliary quiver $Q_V$. For vertices, $(Q_V)_0 := \mathcal P_V$.
Let $\Delta$ be the diagonal of $(Q_V)_0\times (Q_V)_0$; now we define the arrows of $Q_V$:
\begin{displaymath} (Q_V)_1 = \{(P,P')\in ((Q_V)_0)^2\setminus\Delta : (\exists y\in P', x\in P) \Rightarrow( y\preceq x \text{ and } P\cup P' \text{ is path-connected})\}. \end{displaymath}
Note that $(Q_V)_0$ has at least 2 elements and $Q_V$ is a quiver of type $\At_{|\mathcal P_V|-1}$.
We call $Q_V$ the \textdef{auxiliary quiver} of $V$.
\end{definition}

\begin{definition}\label{def:auxiliary representation}
Consider the setup in Proposition \ref{prop:the partition}.
Let $Q_V$ be the quiver in Definition \ref{def:auxiliary quiver}.
We define a representation $M_V$ of $Q_V$ by defining its vector spaces at each vertex and the linear maps on the arrows of $Q_V$:
\begin{align*}
M_V(P) &= V(x_P) & M_V(P,P') &= V(x_P,x_{P'}).
\end{align*}
Note $M_V$ is finite-dimensional.
We call $M_V$ the \textdef{auxiliary representation} of $V$.
\end{definition}

\begin{example}\label{ex:auxiliary}
    Let $\ARt$ be a continuous quiver of type $\ARt$ so that $S \neq \emptyset$ and let $V$ be a Jordan cell. Then for all $R_n \in \mathcal R$, each bar in the bar code decomposition of $V|_{\overline{R}_n}$ is supported on all of $\overline{R}_n$. This mean the elements of $\mathcal P_V$, and hence the vertices of the auxiliary quiver $Q_V$, are in bijection with the elements of $S$. Denoting vertices by their corresponding element of $S$, there is an arrow $s_i \rightarrow s_{i+1}$ in $Q_V$ if $i$ is odd and there is an arrow $s_{i+1} \rightarrow s_{i}$ in $Q_V$ if $i$ is even. These ``zig-zag'' quivers of type $\At_n$ appear in the work of Burghelea and Dey \cite{BD}.
\end{example}

\begin{remark}\label{rmk:isomorphic lifts}
    Suppose $V$ and $W$ are isomorphic finitistic representations of $\ARt$. Then Remark \ref{rmk:restriction decomposition} implies that $\mathcal P_V = \mathcal P_W$ and thus $Q_V = Q_W$. It follows that the auxilliary representations of $V$ and $W$ are isomorphic; that is, $M_V \cong M_W$.
\end{remark}

We now recall bars and Jordan cells (Definitions \ref{def:discrete bar} and \ref{def:discrete Jordan cell}) for type $\At_n$ quivers.
These are known to representation theorists as strings and bands, respectively.
\begin{definition}\label{def:discrete bar}
Let $Q=(Q_0,Q_1)$ be an $\At_n$ quiver and $\xi_n:\Z\to Q_0$ be given by $i\mapsto r$ where $i\equiv r\mod (n+1)$.
Let $I=\{i,i+1,\ldots,i+\ell\}$ be an interval in $\Z$ and for each $j\in Q_0$ let $E_n(j)=\xi_n^{-1}(j) \cap I$.

For each $j\in Q_0$ let $M_I(j) = k^{|E(j)|}$.
Give each $M_I(j)$ the standard ordered basis $\mathcal B(j)$ and observe the unique order preserving bijection $E_n(j)\to \mathcal B(j)$.
Denote by $\vecb_i$ the image of $i$ in $\mathcal B(j)$.

For each arrow $j\to j'$ in $Q_1$, where $|j-j'|=1$, define $M_I(j,j')$ on each $\vecb_i\in\mathcal B(j)$:
\begin{displaymath}
    M_I(j,j') \left(\vecb_i\right) = \begin{cases}
    \vecb_{i-j+j'} & i-j+j'\in E_n(j') \\
    0 & \text{otherwise}. \end{cases}
\end{displaymath}
A representation isomorphic to $M_I$ is called a \textdef{bar}; a direct sum of bars a \textdef{bar code}.
\end{definition}

\begin{definition}\label{def:discrete Jordan cell}
Let $Q=(Q_0,Q_1)$ be an $\At_n$ quiver.
Let $d\geq 1\in \N$ and let $M$ be a representation of $\At_n$ such that $\dim M(j)=d$ for all $j\in Q_0$. Furthermore, assume $M(j,j')$ is an isomorphism for all arrows $j\to j'\in Q_1$.

By inverting necessary isomorphisms, there is an isomorphism
\begin{displaymath}
\widehat M: M(0)\to M(1)\to \cdots \to M(n)\to M(n+1)=M(0).
\end{displaymath}
We call $M$ a \textdef{Jordan cell} if whenever $M(0)\cong U\oplus W$, where $U$ and $W$ are invariant subspaces under $\widehat{M}$, then either $U=0$ or $W=0$.
\end{definition}

The decomposition of finite-dimensional representations of $\At_n$ type quivers over an algebraically closed field is due to Donovan and Freislich \cite{DF} and Nazarova \cite{N}.
However, algebraic closure is not necessary.
See, for example, \cite{BD} and \cite{G} for acyclic and cyclic cases, respectively.
\begin{theorem}[Combining \cite{DF, N,BD,G}]\label{thm:finite decomposition}
A finite-dimensional representation of an $\At_n$ type quiver decomposes uniquely (up to isomorphism) into a direct sum of a bar code and Jordan cells.
Each bar and each Jordan cell are indecomposable.
\end{theorem}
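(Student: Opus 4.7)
The plan is to deduce this theorem from the classical representation theory of $\At_n$ quivers, separating the argument into three ingredients: existence of a decomposition into indecomposables, indecomposability of bars and Jordan cells individually, and classification of all indecomposables as one of these two types.

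First, the category of finite-dimensional representations of a finite quiver $Q$ over $k$ is Krull--Schmidt: the endomorphism ring of any finite-dimensional indecomposable is local (apply Fitting's lemma to a primitive idempotent). Hence any $M \in \Rep(Q)$ decomposes into indecomposables uniquely up to isomorphism and reordering of summands. So uniqueness will follow once the classification is in place.

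Second, indecomposability of the two families is a short argument. For a bar $M_I$, a direct computation shows $\End(M_I) \cong k$: any endomorphism is forced to act as a single scalar because the basis vectors at each vertex are linked by the interval structure of $I$, so a local endomorphism ring is immediate. For a Jordan cell $M$, indecomposability is essentially built into Definition \ref{def:discrete Jordan cell}: any decomposition $M \cong M' \oplus M''$ restricts to a $\widehat{M}$-invariant decomposition of $M(0)$, and the invariance hypothesis forces one summand to vanish.

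For the classification I would split into two cases according to orientation. If $Q$ is \emph{not} an oriented cycle, then the underlying cycle must carry at least one sink (and hence at least one source, for parity reasons); cutting $Q$ at such a sink reduces the problem to a finite type $A$ quiver, whose indecomposables are interval modules, i.e., bars, by Gabriel's theorem, and no Jordan cell can occur because the existence of $\widehat{M}$ is blocked by the sink. If instead $Q$ is an oriented cycle, then $kQ$ is a cyclic Nakayama-style algebra whose indecomposable finite-dimensional representations split into string modules (the bars) and band modules (the Jordan cells, with $\widehat{M}$ encoding the band data). The main obstacle is this last case, which is not elementary: over an arbitrary field $k$ it requires Kronecker's theorem on matrix pencils (equivalently, the classification of finitely generated indecomposable $k[t,t^{-1}]$-modules) to produce and normalize the band modules, and then a separate argument to show that no further indecomposable families exist. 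This is precisely the content of the theorem of Donovan--Freislich and Nazarova, whose acyclic and cyclic versions are worked out in the cited references \cite{DF,N,BD,G}, so I would invoke rather than reprove it.
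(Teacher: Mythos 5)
The paper does not actually prove this theorem; it cites it as a combination of \cite{DF,N,BD,G} (with \cite{BD} and \cite{G} covering the acyclic and cyclic orientations, respectively, over an arbitrary field). So the most defensible move here is simply to cite, as the paper and your final paragraph both do. The problem is that you go further and sketch the classification, and the sketch for the acyclic case is wrong.

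You claim that if $Q$ is not an oriented cycle then ``cutting $Q$ at such a sink reduces the problem to a finite type $A$ quiver, whose indecomposables are interval modules, \ldots\ and no Jordan cell can occur because the existence of $\widehat{M}$ is blocked by the sink.'' Both halves of this are incorrect. First, cutting the cycle at a sink $s$ does not reduce representations of $\At_n$ to representations of a linear type $A$ quiver: a representation of $\At_n$ that is ``cut open'' at $s$ is a representation of a linear quiver with the additional constraint that the two endpoint spaces coincide, and that identification carries genuine representation-theoretic content. More to the point, an acyclic $\At_n$ quiver is of \emph{tame} representation type, not finite type, so Gabriel's theorem does not apply and the indecomposables are not only interval modules. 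Second, Jordan cells most certainly do occur for acyclic orientations: in Definition \ref{def:discrete Jordan cell} the map $\widehat{M}$ is defined ``by inverting necessary isomorphisms,'' so an arrow pointing the ``wrong way'' around the cycle is traversed via $M(j,j')^{-1}$ rather than $M(j,j')$, and sinks and sources pose no obstruction whatsoever. Concretely, for the Kronecker quiver ($\At_1$ with both arrows $0 \to 1$), every regular indecomposable with both structure maps invertible is a Jordan cell in the sense of the paper, and these form one-parameter families. The correct statement is that both orientations --- cyclic and acyclic --- produce bars \emph{and} Jordan cells, and the acyclic case is treated in \cite{BD} (in the language of ``zigzag'' quivers), not by reduction to Gabriel. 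Your first two ingredients (Krull--Schmidt uniqueness, and indecomposability of bars via $\End(M_I)\cong k$ and of Jordan cells via the invariant-subspace condition) are fine; it is the dichotomy in the third ingredient that needs to be discarded.
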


\section{Main Results}\label{sec:the thms}
In this section we prove the main results via a method called ``pushing down.''
We show how the finite-dimensional indecomposable representations of a type $\At_n$ quiver are related to the  pwf indecomposable 
representations of a type $\ARt$ quiver.

\begin{definition}\label{def:push down}
Let $\ARt$ be a continuous quiver of type $\At$ and $V$ a finitistic representation of $\ARt$.
Let $Q_V$ and $M_V$ be the auxiliary quiver and representation, respectively.
Let $M\cong \bigoplus_{i=1}^n M_i^{\ell_i}$ be a decomposition of $M_V$ as in Theorem \ref{thm:finite decomposition}, where $\ell_i$ denotes multiplicity and each $M_i$ is a bar or Jordan cell.

For each $M_i$ we define a representation $U_i$ of $\ARt$.
We define its vector spaces and maps between vector spaces in adjacent partitions of $\Sp^1$.
Using their composition yields the whole representation $U_i$.
\begin{align*}
U_i(x) &= M_i(P) \text{ where }x\in P \\
U_i(x,y) &= M_i(P,P') \text{ where }x\in P,\, y\in P'.
\end{align*}
We call $U_i$ the \textdef{push down} of $M_i$.
Note if $y\preceq x\in P$ then $U_i(x,y)$ is the identity.
\end{definition}

\begin{lemma}\label{lem:push down}
Consider the set up in Definition \ref{def:push down}.
(1) If $M_i$ is a bar then $U_i$ is a bar.
(2) If $M_i$ is a Jordan cell then $U_i$ is a Jordan cell.
\end{lemma}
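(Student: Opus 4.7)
I would treat the two parts separately, in each case producing explicitly the required bar or Jordan cell structure on $U_i$. Throughout, let $m = |\mathcal P_V|$ and enumerate $\mathcal P_V = \{P_1, \dots, P_m\}$ in counterclockwise cyclic order so that $Q_V$ is an $\At_{m-1}$ quiver with vertex set identified with $\{1, \dots, m\}$ via Convention \ref{cnv:finite quivers}. The cyclic cover $\xi_n : \Z \to (Q_V)_0$ from Definition \ref{def:discrete bar} then records which partition class each integer of $I$ lands in.

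\emph{Part (1): push-down of a bar.} Assume $M_i \cong M_I$ with $I = \{j_0, j_0+1, \dots, j_0+\ell\} \subset \Z$. I would construct a bounded interval $J \subset \R$ with $U_i \cong M_J$, matching the bar description of Definition \ref{def:string indecomposable}. The heuristic is that each basis vector $\vecb_j \in M_I$ represents a strand of the bar living inside the partition class $P_{\xi_n(j)}$; consecutive strands share endpoints in $\Sp^1$, and taken together they trace a single connected arc which lifts uniquely to an interval in $\R$. Concretely, fix a lift $\tilde P_{\xi_n(j_0)} \subset \R$ of the starting partition class, then iteratively adjoin lifts of the neighboring partition classes so that adjacent lifts share endpoints; the resulting
\[
J \;=\; \bigcup_{j=j_0}^{j_0+\ell} \bigl(\tilde P_{\xi_n(j)} + 2\pi \lfloor (j-j_0)/m \rfloor\bigr)
\]
is a bounded interval of $\R$, with its extreme endpoints taken open or closed exactly as the endmost basis vectors of $M_I$ are killed at the ends of their support. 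Then I would exhibit the isomorphism $U_i \cong M_J$ vertex-by-vertex: for $x \in P_r$, the basis $\mathcal B(P_r) \subset M_i(P_r) = U_i(x)$ is in strand-labeled bijection with $\xi^{-1}(x) \cap J$, and this bijection intertwines $M_i(P_r, P_{r'})$ with $M_J$'s transition maps because both send each strand to the next along the bar or to zero at the endpoint.

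\emph{Part (2): push-down of a Jordan cell.} Assume $M_i$ is a $d$-dimensional Jordan cell. I would verify each clause of Definition \ref{def:band indecomposable} for $U_i$:
\begin{itemize}
\item $\dim U_i(x) = \dim M_i(P) = d$ for every $x \in P$ and every $P \in \mathcal P_V$.
\item Every generating morphism of $\ARt$ is sent by $U_i$ to an isomorphism: within a single partition class the map is the identity, while between adjacent classes $P, P'$ it equals $M_i(P,P')$, an isomorphism by assumption on $M_i$. General morphisms are compositions of generating ones, hence also isomorphisms.
\item Indecomposability: I would show that under the identification $U_i(x_0) = M_i(P_0)$ (where $P_0$ is the class containing $x_0$), the map $\widehat{U_i}$ of Definition \ref{def:hat map} coincides with $\widehat{M_i}$ of Definition \ref{def:discrete Jordan cell}. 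The composition defining $\widehat{U_i}$ telescopes through the transition maps $V(x_{P_r}, x_{P_{r'}}) = M_i(P_r, P_{r'})$ associated to consecutive partition classes around $\Sp^1$, which are precisely the arrow maps composing to $\widehat{M_i}$ around $Q_V$. Consequently any $\widehat{U_i}$-invariant decomposition $U_i(x_0) = U \oplus W$ is a $\widehat{M_i}$-invariant decomposition of $M_i(P_0)$, and the Jordan cell hypothesis forces $U = 0$ or $W = 0$.
\end{itemize}

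\textbf{Main obstacle.} The routine but finicky portion is the combinatorial bookkeeping in part (1): choosing the lifts of the $P_r$ so that $J$ is a single interval rather than a disjoint union, handling the orientation reversals of $Q_V$ at each $s_n \in S$ so the walk through $\Sp^1$ proceeds monotonically on each arc, and matching open/closed endpoints so that the boundary zero maps of $M_I$ become the boundary zero maps of $M_J$. Part (2) is largely conceptual and should reduce cleanly once $\widehat{U_i}$ and $\widehat{M_i}$ are identified as the same ordered composition of transition maps.
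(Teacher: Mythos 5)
Your proposal takes essentially the same approach as the paper's proof. For part (1) the paper normalizes by reindexing and rotating so that $\widetilde I = \{0,\dots,\ell\}$ and $St_\measuredangle(P_0)=0$, then defines the interval $I$ directly via $St_\measuredangle(P_0)$, $En_\measuredangle(P_\ell)$, and the winding number $m$, and finally exhibits the pointwise bijection $F_x:\xi_n^{-1}(j)\cap\widetilde I\to\xi^{-1}(\theta)\cap I$ and checks it intertwines the transition maps; your iteratively-glued lifts and the identification $\vecb_j\leftrightarrow\vecb_{F_x(j)}$ are the same construction in slightly different notation, and your acknowledged ``main obstacle'' (choosing lifts consistently so $J$ is a single interval, matching open/closed endpoints) is exactly what the paper's normalization handles. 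Part (2) is identical in both: observe $\dim U_i(x)=d$, all generating morphisms go to isomorphisms, and $\widehat{U_i}=\widehat{M_i}$, so the invariant-subspace condition transfers directly.
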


\begin{proof}
     (1) Suppose $M_i = M_{\widetilde{I}}$ for some interval $\widetilde{I} \subset \mathbb Z$. By reindexing the vertices of $Q_V$ and rotating the quiver $\ARt$ 
     if necessary,  we can assume $\widetilde{I} = \{0,1,\ldots,\ell\}$ for some $\ell$ and $St_\measuredangle(P_0) = 0$.  Thus there exists $m \geq 0$ such that $m|\mathcal P|\leq \ell < (m+1)|\mathcal P|$.
    
    Now define
        $I := |St_{\measuredangle}(P_0), 2\pi m + En_{\measuredangle}(P_\ell)|$
    where the interval is closed on the left if and only if $St(P_0) \in P_0$ and is closed on the right if and only if $En(P_\ell) \in P_\ell$. We claim that $U_i = M_{I}$.
    
    For $x \in \Sp^1$, write $x = e^{i\theta}$ with $0 \leq \theta < 2\pi$ and suppose $x \in P_j$. Then there is an order preserving bijection $F_x:\xi_n^{-1}(j)\cap \widetilde{I} \rightarrow \xi^{-1}(\theta)\cap I$ given by $F_x(j + \ell|\mathcal P|) = \theta + 2\pi\ell$. Thus we can identify the basis element $\vecb_i \in M_{\widetilde{I}}(j) = U_i(x)$ with $\vecb_{F_x(i)} \in M_{I}(x)$. With this identification, we have $U_i(x) = M_I(x)$.
    
    Now suppose $y \preceq x$ in $\ARt$. Suppose $x \in P_j$ and $y \in P_{j'}$ and write $x = e^{i\theta}, y = e^{i\phi}$ with $\theta, \phi \in [0,2\pi)$. If $j = j'$, then $U_i(x,y)$ is the identity and thus $M_I(x,y)(\vecb_{F_x(i)}) = \vecb_{F_y(i)}$ with $F_x(i) - F_y (i) = \theta - \phi$. Likewise, if $j' = j \pm n$, then $M_I(x,y)(\vecb_{F_x(i)}) = \vecb_{F_y(i \pm n)}$ and $F_x(i) - F_y(i\pm n) = \theta - \phi$. Thus $M_I(x,y) = U_i(x,y)$ as claimed.
    
	(2) Let $M_i$ be a Jordan cell. Assume, without loss of generality, that $e^{i\alpha_0} \in P_0$.
	It follows immediately that the push down $U_i$ has $\widehat{U}_i=\widehat{M}_i$ satisfying Definition \ref{def:band indecomposable} and is thus a Jordan cell.
\end{proof}

\begin{lemma}\label{lem:indecomposable push down}
Each $U_i$ in Definition \ref{def:push down} is indecomposable.
\end{lemma}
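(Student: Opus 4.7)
The plan is to reduce the indecomposability of $U_i$ over $\ARt$ to that of $M_i$ over the auxiliary quiver $Q_V$, which is guaranteed by Theorem~\ref{thm:finite decomposition} since $M_i$ is a bar or Jordan cell. Suppose $U_i \cong X \oplus Y$ as representations of $\ARt$; I will produce a corresponding decomposition of $M_i$ on $Q_V$ and conclude via its indecomposability.

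The central step is to show that within each $P \in \mathcal P_V$, the subrepresentations $X$ and $Y$ are constant: for every $x \in P$, one has $X(x) = X(x_P)$ and $Y(x) = Y(x_P)$ under the canonical identification $U_i(x) = M_i(P) = U_i(x_P)$. This will rely on two ingredients. First, every $x \in P$ is $\preceq$-comparable to $x_P$. Second, for any two $\preceq$-comparable points of $P$, the corresponding $U_i$-map is the identity on $M_i(P)$ by Definition~\ref{def:push down}. Naturality of the decomposition then forces an inclusion $X(x_P) \subseteq X(x)$ (or the reverse), and the analogous containment for $Y$, after which $\dim X(x) + \dim Y(x) = \dim M_i(P) = \dim X(x_P) + \dim Y(x_P)$ forces equality.

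With this in hand, I would define $X^{Q_V}(P) := X(x_P)$ and $Y^{Q_V}(P) := Y(x_P)$, and for each arrow $(P, P') \in (Q_V)_1$ take the structure map to be the restriction of $M_i(P,P') = U_i(x_P, x_{P'})$. Naturality of the $U_i$-decomposition ensures these restrictions land in the correct subspaces, yielding a direct-sum decomposition $M_i \cong X^{Q_V} \oplus Y^{Q_V}$ of representations of $Q_V$. Since $M_i$ is indecomposable, one summand vanishes, and combined with the constancy on partition pieces this forces $X = 0$ or $Y = 0$, as required.

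The main obstacle is verifying the comparability claim for partition elements of the form $P = P_n^{|\mathcal P_n|} \cup P_{n+1}^1$, which straddle the element $x_P = s_{n+1} \in S$ where $\ARt$ is not totally ordered. By Remark~\ref{rmk:order}, $s_{n+1}$ is either a sink (if $n+1$ is even) or a source (if $n+1$ is odd). A direct check against Definition~\ref{def:ARt} in each of $\overline{R}_n$ and $\overline{R}_{n+1}$ shows that in the sink case $s_{n+1} \preceq x$ for every $x \in P$, while in the source case $x \preceq s_{n+1}$ for every $x \in P$; in either situation $x$ and $x_P$ are directly comparable via a single generating morphism, to which the identity-map property of Definition~\ref{def:push down} applies. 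For $P \subseteq \overline{R}_n$ contained in a single segment comparability is automatic, and beyond this point the argument is a direct transfer of a standard quiver-representation technique to the continuous setting.
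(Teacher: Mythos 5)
Your proposal is correct and follows essentially the same approach as the paper: both pull a hypothetical decomposition $U_i\cong X\oplus Y$ back to a decomposition $M_i\cong M_X\oplus M_Y$ of the auxiliary representation on $Q_V$ and invoke indecomposability of $M_i$ from Theorem~\ref{thm:finite decomposition}. Your extra care in verifying that the summands are constant on each $P\in\mathcal P_V$ (via comparability to $x_P$ and the identity property in Definition~\ref{def:push down}) just fills in a step the paper handles with the terse ``we may assume both $A$ and $B$ are also constant on each $P$.''
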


\begin{proof}
Suppose $U_i\cong A\oplus B$.
Since $U_i$ is constant on each $P\in \mathcal P_V$ we may assume both $A$ and $B$ are also constant on each $P\in\mathcal P$.
Let $f:U_i\stackrel{\cong}{\to} A\oplus B$ be an isomorphism.
Define $M_A$ and $M_B$ to be representations of $Q_V$ given by
\begin{align*}
M_A(P) &= A(x_P) & M_A(P,P') &= A(x_P, x_{P'}) \\
M_B(P) &= B(x_P) & M_B(P,P') &= B(x_P,x_{P'}).
\end{align*}
By the following commutative diagram we see $M_i\cong M_A\oplus M_B$:
\begin{displaymath}\xymatrix@R=6ex{
M_i(P) \ar@{=}[r] \ar[d]|-{M_i(P,P')} & U_i(x_P) \ar[r]^-{f(x_P)} \ar[d]|-{U(x_P, x_{P'})} &
A(x_P) \oplus B(x_P) \ar@{=}[r] \ar[d]|-{A(x_P,x_{P'})\oplus B(x_P,x_{P'})} & M_A(P)\oplus M_B(P') \ar[d]|-{M_A(P,P')\oplus M_B(P,P')} \\
M_i(P') \ar@{=}[r] & U_i(x_{P'}) \ar[r]_-{f(x_{P'})} & A(x_{P'})\oplus B(x_{P'}) \ar@{=}[r] & M_A(P')\oplus M_B(P').
}\end{displaymath}
Since $M_i$ is indecomposable, up to symmetry $M_i\cong M_A$ and $M_B=0$.
Then $B=0$ and so $U_i\cong A$.
Therefore, $U_i$ is indecomposable.
\end{proof}

\begin{theorem}\label{thm:indecomposables}
Let $V$ be a bar or Jordan cell representation of $\ARt$.
Then $V$ is indecomposable.
\end{theorem}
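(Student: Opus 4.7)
The plan is to use Lemma \ref{lem:indecomposable push down} as the key reduction. Since the refinement of a finitistic representation $V$ is isomorphic to $V$ (Proposition \ref{prop:refinement}), and a direct comparison of Definitions \ref{def:refinement} and \ref{def:push down} shows that this refinement is exactly the push-down of $M_V$, it follows that $V$ is isomorphic to the push-down of its auxiliary representation. By Lemma \ref{lem:indecomposable push down}, indecomposability of $V$ will then follow from indecomposability of $M_V$ in $Q_V$, and by Theorem \ref{thm:finite decomposition} it suffices to exhibit $M_V$ as a bar or Jordan cell in $Q_V$. This approach handles all Jordan cells and all finitistic bars; non-finitistic bars will need a separate argument using Crawley-Boevey's theorem.

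For the Jordan cell case, every structure map of $V$ is an isomorphism, so each restriction $V|_{\overline{R}_n}$ decomposes into bars supported on all of $\overline{R}_n$, and in particular $V$ is finitistic. A direct comparison of Definitions \ref{def:hat map} and \ref{def:discrete Jordan cell}, together with Example \ref{ex:auxiliary} (which identifies the partition pieces with $S$ when $S \neq \emptyset$), yields $\widehat{M_V} = \widehat{V}$; combined with constant dimension and the fact that all arrow maps of $M_V$ are isomorphisms (inherited from $V$), the invariant-subspace condition at the basepoint transfers verbatim, so $M_V$ is a Jordan cell in $Q_V$ and the reduction applies.

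For a bar $V = M_I$, I would split on whether $I$ meets $\xi^{-1}(S)$. If $I \cap \xi^{-1}(S) = \emptyset$, then connectedness of $I$ forces it into a single component of $\R \setminus \xi^{-1}(S)$, so $\xi(I) \subset R_n$ and $V$ is supported on an arc of $R_n$. Then $V$, viewed as a representation of the totally ordered $\overline{R}_n$, is a classical interval module of the kind decomposed by Crawley-Boevey \cite{C-B}, and hence is indecomposable; any $\ARt$-decomposition $V = A \oplus B$ restricts to one of $V|_{\overline{R}_n} = V$, so one summand vanishes on $\overline{R}_n$ and, since $V$ already vanishes outside $\overline{R}_n$, everywhere. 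If instead $I \cap \xi^{-1}(S) \neq \emptyset$, then every bar in every $V|_{\overline{R}_n}$ has an endpoint at a point of $\xi^{-1}(S) \subset \overline{R}_n \setminus R_n$, so $V$ is finitistic. In this subcase I would construct an explicit interval $\tilde{I} \subset \Z$ indexed by the successive pieces of $\mathcal P_V$ traversed by $I$, with multiplicities recording the winding. The order-preserving bijection between $E(x_P) = \xi^{-1}(x_P) \cap I$ and the corresponding fiber of $\tilde{I}$ over $P$ then intertwines the shift maps of Definitions \ref{def:string indecomposable} and \ref{def:discrete bar}, yielding an isomorphism $M_V \cong M_{\tilde{I}}$ in $Q_V$.

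The main obstacle I anticipate is precisely this last identification $M_V \cong M_{\tilde{I}}$: matching the parity-dependent arrow directions in $Q_V$ (coming from Definition \ref{def:the R's}) with the shift directions in $M_I$, and handling the endpoint behavior of $I$ at points of $\xi^{-1}(S)$, requires careful combinatorial bookkeeping on basis vectors. All other steps are either formal consequences of the framework built in Sections 2 and 4, or direct appeals to Lemma \ref{lem:indecomposable push down}, Theorem \ref{thm:finite decomposition}, and Crawley-Boevey's theorem.
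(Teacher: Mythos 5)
Your overall strategy is essentially the paper's: reduce to Lemma \ref{lem:indecomposable push down} by identifying the refinement of a finitistic bar or Jordan cell $V$ with the push down of its auxiliary representation $M_V$, show $M_V$ is a bar or Jordan cell of $Q_V$ (hence indecomposable by Theorem \ref{thm:finite decomposition}), and handle the non-finitistic case separately via Crawley-Boevey. The Jordan cell branch and the plan for constructing $\widetilde{I}$ in the finitistic bar branch match the paper's argument closely, and your observation that the refinement is definitionally the push down of $M_V$ is correct.

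However, your case split for bars has a genuine gap when $S = \emptyset$. You split on whether $I \cap \xi^{-1}(S) = \emptyset$, arguing that in the empty case $I$ sits in one component of $\R \setminus \xi^{-1}(S)$ and hence $\xi(I) \subset R_n$. But when $S = \emptyset$ the set $\xi^{-1}(S)$ is empty, so $\R \setminus \xi^{-1}(S) = \R$ is a single component and your first case applies to \emph{every} bar, while your conclusion $\xi(I) \subset R_n$ is false for, say, $I = (0, 3\pi/2)$ (recall that for $S = \emptyset$, Definition \ref{def:the R's} sets artificial breakpoints at $e^{0}$ and $e^{i\pi}$, which are not elements of $S$). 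Such bars are in fact finitistic and should be handled by the auxiliary-quiver machinery, but your dichotomy never routes them there. The paper avoids this by splitting on whether $\supp V \subset R_n$ directly, which is the correct dichotomy (for a bar $M_I$, $\supp M_I \subset R_n$ for some $n$ if and only if $M_I$ fails to be finitistic), and is well defined in all cases including $S = \emptyset$. Replacing your $\xi^{-1}(S)$-based split with the paper's support-based split fixes the argument; the rest of your proposal goes through.
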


\begin{proof}
If $\supp V\subset R_n$ for some $R_n\in\mathcal R$ the result follows from Crawley-Boevey's bar code theorem \cite{C-B} and Remark \ref{rmk:restriction decomposition}.
Thus we can assume $V$ is finitistic and, without loss of generality, that $V$ is its own refinement.
Let $\mathcal P_V$, $Q_V$ and $M_V$ be the partition of $\Sp^1$, auxiliary quiver, and auxiliary representation, respectively, obtained from $V$.
Further assume without loss of generality that $e^{i\alpha_0}\in P_0$.

Suppose $V\cong M_I$ is a bar. 
Without loss of generality, assume $\inf(I)\in [0,2\pi)$. 
Then $\inf(I) = St_\measuredangle (P_i)$ for $P_i\in\mathcal P_V$ where $e^{i\cdot \inf(I)}\in P_i$ if and only if $\inf(I)\in I$.
Similarly there exists $m\in\N$ such that $\sup(I)=En_\measuredangle(P_j)+ 2m\pi$ for $P_j\in \mathcal P_V$ where $e^{i\cdot \sup(I)}\in P_j$ if and only if $\sup(I)\in I$.
Let $\widetilde{I} = \{i,i+1,\ldots, j+m|\mathcal P_V|\}$.
Then for each $P\in\mathcal P_V$ and $x\in P$ we see $|E(x)| = |E_n(P)|$.
One may use a similar technique to that in Lemma \ref{lem:push down} to see $M_{\widetilde{I}}$ is a bar.
By Lemma \ref{lem:push down} and its proof the push down of $M_{\widetilde{I}}$ is $M_I$.
By Lemma \ref{lem:indecomposable push down}, $V\cong M_I$ is indecomposable.

Now suppose $V$ is a Jordan cell.
Then $\widehat{M}_V$ in Definition \ref{def:discrete Jordan cell} is equal to $\widehat{V}$.
Thus, $M_V$ is a Jordan cell and its push down is $V$.
By Lemma \ref{lem:indecomposable push down}, $V$ is indeomposable.
\end{proof}
	
\begin{lemma}\label{lem:decomposition push down}
Consider the set up in Definition \ref{def:push down}. 
Let $U$ be the refinement of $V$.
Then $U\cong \bigoplus_{i=1}^n U_i^{\ell_i}$.
\end{lemma}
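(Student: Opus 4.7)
The plan is to view the push down construction of Definition \ref{def:push down} as an assignment from $\Rep(Q_V)$ to $\Rep(\ARt)$ that commutes with direct sums, and then to identify the refinement $U$ itself as a push down of $M_V$. The lemma then follows from the decomposition $M_V \cong \bigoplus M_i^{\ell_i}$ supplied by Theorem \ref{thm:finite decomposition}.

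First, I would observe that if the construction of Definition \ref{def:push down} is applied to $M_V$ with the trivial decomposition $M_V = M_V^1$, the resulting representation agrees on the nose with the refinement $U$ from Definition \ref{def:refinement}. This is immediate from the definitions: both send $x \in P$ to $V(x_P) = M_V(P)$, and both send a generating morphism $g_{x,y}$ with $x \in P$, $y \in P'$ to $V(x_P, x_{P'}) = M_V(P, P')$, with identities on generating morphisms internal to a single $P$.

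Second, I would show that the push down respects direct sums. Given an isomorphism $f \colon M \to N_1 \oplus N_2$ in $\Rep(Q_V)$, define $\tilde f$ between the push down of $M$ and the direct sum of the push downs of $N_1$ and $N_2$ by $\tilde f(x) = f(P)$ for $x \in P$. Each $\tilde f(x)$ is an isomorphism of vector spaces, so by Remark \ref{rmk:isomorphism} it suffices to verify that $\tilde f$ is a morphism of representations. For a generating morphism $g_{x,y}$ with $x, y \in P$, both transition maps are identities and the naturality square commutes trivially. For $x \in P$, $y \in P'$ with $P \neq P'$, the naturality square reduces to the naturality square for $f$ at the $Q_V$-morphism $(P, P')$ (or to a composite of such, if $P$ and $P'$ are non-adjacent in $Q_V$), which commutes because $f$ is a morphism in $\Rep(Q_V)$.

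Combining these two steps with the decomposition $M_V \cong \bigoplus_{i=1}^n M_i^{\ell_i}$ yields $U \cong \bigoplus_{i=1}^n U_i^{\ell_i}$. The only potential obstacle is the naturality check in the second step, but this is essentially bookkeeping: the refinement is constant on each $P \in \mathcal P_V$, so the only data carrying content are the transitions across partition boundaries, which are precisely the morphisms encoded by $M_V$. Thus no new verification beyond what is guaranteed by $f$ being a morphism of $Q_V$-representations is required.
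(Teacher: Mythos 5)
Your proposal is correct and takes essentially the same approach as the paper: the paper's proof also defines the morphism $f(x) := \tilde f(P)$ for $x \in P$ (where $\tilde f \colon M_V \to \bigoplus M_i^{\ell_i}$ is the given isomorphism) and verifies the naturality square componentwise via the corresponding square for $\tilde f$ over $Q_V$. Your slightly more structured phrasing, separating ``push down of $M_V$ equals $U$'' from ``push down commutes with direct sums,'' is just a reorganization of the same argument.
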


\begin{proof}
Denote the isomorphism $M\stackrel{\cong}{\to} \bigoplus_{i=1}^n M_i^{\ell_i}$ by $\tilde{f}$.
Let $x\in \Sp^1$ and $P\in\mathcal P_V$ such that $x\in P$.
Define $f(x):U(x)\to \bigoplus U_i^{\ell_i}(x)$ to be $\tilde{f}(P): M(P)\to \bigoplus_{i=1}^n M_i^{\ell_i}(P)$.
Notice $U(x) = M(P)$ and $\bigoplus U_i^{\ell_i}(x) = \bigoplus_{i=1}^n M_i^{\ell_i}(P)$.

\begin{displaymath}\xymatrix{
U(x) \ar@{=}[r] \ar[d]_-{U(x,y)} \ar@/^4ex/[rrr]|-{f(x)}& M_V(P) \ar[r]_-{\tilde{f}(P)} \ar[d]_-{M_V(P,P')} &
\bigoplus M_i^{\ell_i}(P) \ar@{=}[r] \ar[d]^-{\bigoplus M_i(P,P')} & \bigoplus U_i^{\ell_i}(x) \ar[d]^-{\bigoplus U_i(x,y)} \\
U(y) \ar@{=}[r] \ar@/_4ex/[rrr]|-{f(y)} & M_V(P') \ar[r]^-{\tilde{f}(P')} &
\bigoplus M_i^{\ell_i}(P') \ar@{=}[r] & \bigoplus U_i^{\ell_i}(y)
}\end{displaymath}
Then the diagram above commutes, completing the proof.
\end{proof}

\begin{theorem}\label{thm:decomposition}
Let $\ARt$ be a continuous quiver of type $\At$ and $V$ a pointwise finite-dimensional representation of $\ARt$.
Then $V$ decomposes uniquely (up to isomorphism) into a direct sum of a bar code and finitely-many Jordan cells.
\end{theorem}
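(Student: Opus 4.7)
The plan is to reduce to the finitistic case by extracting the ``interior'' bars first, then apply the auxiliary-representation machinery of Section 4. Specifically, I will show $V \cong V_{\mathrm{int}} \oplus V_{\mathrm{fin}}$, where $V_{\mathrm{int}}$ is a bar code whose summands each have support contained in some open arc $R_n$, and $V_{\mathrm{fin}}$ is finitistic. The interior piece is handled by Crawley-Boevey's theorem and the finitistic piece by the pushdown machinery, ultimately invoking Theorem \ref{thm:finite decomposition}.

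For existence, fix $R_n \in \mathcal R$; the restriction $V|_{\overline{R}_n}$ decomposes uniquely into a bar code by Crawley-Boevey (Remark \ref{rmk:restriction decomposition}). Separate the resulting bars into those with support entirely in $R_n$ and those whose support meets $\overline{R}_n \setminus R_n$. Lemma \ref{lem:good split} promotes each interior bar to a summand of $V$ itself; collecting these across all $R_n$ yields the bar code $V_{\mathrm{int}}$, and the complementary summand $V_{\mathrm{fin}}$ is finitistic by construction. Then Proposition \ref{prop:refinement} lets me replace $V_{\mathrm{fin}}$ by its refinement, so that the auxiliary representation $M_{V_{\mathrm{fin}}}$ over $Q_{V_{\mathrm{fin}}}$ is finite-dimensional. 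I apply Theorem \ref{thm:finite decomposition} to decompose $M_{V_{\mathrm{fin}}}$ into bars and Jordan cells, then use Lemmas \ref{lem:push down} and \ref{lem:decomposition push down} to push this decomposition down and realize $V_{\mathrm{fin}}$ as a direct sum of bars and Jordan cells of $\ARt$. Finite-dimensionality of $M_{V_{\mathrm{fin}}}$ ensures only finitely many Jordan cell summands.

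The main obstacle is uniqueness, since we must control possibly infinite direct sums. Given any decomposition $V \cong \bigl(\bigoplus_\lambda B_\lambda\bigr) \oplus \bigl(\bigoplus_\mu J_\mu\bigr)$ into bars and Jordan cells, I restrict to each $\overline{R}_n$: Crawley-Boevey's uniqueness then determines the multiset of bar summands of $V|_{\overline{R}_n}$ up to isomorphism, which in particular pins down which $B_\lambda$ are interior. By Remark \ref{rmk:isomorphic lifts}, the partition $\mathcal P_{V_{\mathrm{fin}}}$ and the isomorphism class of the auxiliary representation are determined by $V$ alone, so the uniqueness clause of Theorem \ref{thm:finite decomposition}, combined with Theorem \ref{thm:isomorphism classes} to match isomorphism classes under pushdown, fixes the remaining $B_\lambda$ and all $J_\mu$ up to the bijection required by Definition \ref{def:direct sum}. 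Alternatively, one can invoke Botnan and Crawley-Boevey's uniqueness theorem for pwf persistence modules over a small category together with Theorem \ref{thm:indecomposables} to obtain uniqueness in one stroke, at the cost of importing a heavier external result.
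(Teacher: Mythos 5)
Your proposal follows essentially the same route as the paper's proof: split off the bars with support interior to some $R_n$ via Lemma \ref{lem:good split}, reduce the complementary finitistic piece to its refinement, lift to the auxiliary quiver, decompose by Theorem \ref{thm:finite decomposition}, and push down via Lemmas \ref{lem:push down} and \ref{lem:decomposition push down}. Your uniqueness discussion is in fact a bit more explicit than the paper's one-line justification, but it rests on the same ingredients (Crawley-Boevey's uniqueness on each $\overline{R}_n$, Remark \ref{rmk:isomorphic lifts}, and uniqueness in the finite $\At_n$ case).
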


\begin{proof}
Let the $R_n$'s, the $\overline{R}_n$'s, and $\mathcal R$ be as Definition \ref{def:the R's}.
For each $V_n:= V|_{\overline{R}_n}$ let $W_n$ be the direct sum of all bars in the bar code of $V_n$ whose support is contained in $R_n$.
Then by Lemma \ref{lem:good split} each summand of each $W_n$ is a summand of $V$.
Thus,
\begin{displaymath} V \cong V' \oplus \left( \bigoplus_{R_n\in\mathcal R} W_n \right), \end{displaymath}
where $V'$ is finitistic.
Notice that $\left( \bigoplus_{R_n\in\mathcal R} W_n \right)$ is a bar code but may contain uncountably many bars.

Since $V'$ is finitistic let $U$ be its refinement.
By Lemma \ref{lem:decomposition push down} and Remark \ref{rmk:isomorphic lifts}, there is a unique decomposition (up to isomorphism) $U\cong \bigoplus_{i=1}^m U_i^{\ell_i}$ where each $U_i$ is a bar or Jordan cell.
Since $V'\cong U$ by Proposition \ref{prop:refinement}, we have
\begin{displaymath} V\cong \left( \bigoplus_{i=1}^m U_i^{\ell_i} \right) \oplus \left(\bigoplus_{R_n\in \mathcal R} W_n \right). \end{displaymath}
Since each of the decompositions we have used is unique up to isomorphism so is the whole decomposition.
In the decomposition of $M_V$ there are only finitely-many summands.
Thus, only finitely-many of the $U_i$'s (which each have finite multiplicity) may be Jordan cells.
\end{proof}

\begin{remark}
The pwf requirement is the most general hypothesis one may use.
I.e., without this assumption one must already know something about the decomposition of an $\Sp^1$ persistence module in order to show it decomposes into a bar code and Jordan cells.
See, for example, \cite[Example 4.7]{CC-BdS} or \cite[Proposition 4.2.5]{IRT}.
\end{remark}

\begin{corollary}\label{cor:complete indecomposable classification}
Let $V$ be a pointwise finite-dimensional representation of $\ARt$.
Then $V$ is indecomposable if and only if $V$ is a bar or Jordan cell.
\end{corollary}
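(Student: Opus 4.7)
The plan is to derive this corollary as a direct consequence of the two main theorems already in hand, namely Theorem \ref{thm:indecomposables} and Theorem \ref{thm:decomposition}. Since both directions are essentially forced by these results, the work is really just combining them cleanly and noting that an indecomposable summand in a (unique) direct-sum decomposition must itself be the whole module.

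For the backward direction, if $V$ is a bar or a Jordan cell, then Theorem \ref{thm:indecomposables} directly gives that $V$ is indecomposable. So this direction takes one sentence.

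For the forward direction, I would apply Theorem \ref{thm:decomposition} to write
\begin{displaymath}
V \cong \left(\bigoplus_{i=1}^m U_i^{\ell_i}\right) \oplus \left(\bigoplus_{\zeta \in Z} B_\zeta\right),
\end{displaymath}
where each $U_i$ is a Jordan cell and each $B_\zeta$ is a bar. Since $V$ is assumed indecomposable and nonzero, exactly one summand on the right may be nonzero, and that summand must have multiplicity one. Hence either there is exactly one Jordan cell $U_i$ with $\ell_i = 1$ and no bars, and $V \cong U_i$; or there is exactly one bar $B_\zeta$ and no Jordan cells, and $V \cong B_\zeta$. In either case $V$ is (isomorphic to) a bar or a Jordan cell.

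I do not expect any obstacle: the only subtlety is invoking the uniqueness clause of Theorem \ref{thm:decomposition} to conclude that an indecomposable representation cannot split off any nontrivial piece, which is the standard Krull--Schmidt-style argument. No extra machinery is needed.
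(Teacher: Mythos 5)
Your proof is correct and takes exactly the paper's approach: the paper's entire proof of this corollary is the one-line instruction ``Combine Theorems \ref{thm:indecomposables} and \ref{thm:decomposition},'' and your write-up simply spells out that combination in the expected way.
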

\begin{proof}
Combine Theorems \ref{thm:indecomposables} and \ref{thm:decomposition}.
\end{proof}

\begin{corollary}\label{cor:local endomorphism ring}
A pointwise finite-dimensional representation of $\ARt$ has a local endomorphism ring if and only if it is either a bar or a Jordan cell.
\end{corollary}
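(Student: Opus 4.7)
The plan is to derive this corollary directly by combining Corollary \ref{cor:complete indecomposable classification} with the Botnan--Crawley-Boevey theorem cited in the introduction, which states that every pwf persistence module over a small category admits a decomposition (unique up to isomorphism and permutation) into indecomposable summands each having a local endomorphism ring. Since $\ARt$ is a small category (Remark \ref{rmk:hom sets}), their theorem applies verbatim to pwf representations of $\ARt$. Throughout I will use the standard ring-theoretic fact that any module with local endomorphism ring is indecomposable, since a nontrivial direct sum decomposition $V = A \oplus B$ would produce a projection idempotent in $\End V$ distinct from $0$ and $1$, which a local ring cannot contain.

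For the $(\Rightarrow)$ direction the argument is immediate: if $\End V$ is local then $V$ is indecomposable by the standard fact, and then Corollary \ref{cor:complete indecomposable classification} tells us $V$ must be a bar or a Jordan cell.

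For the $(\Leftarrow)$ direction I would argue as follows. Suppose $V$ is a bar or a Jordan cell. By Theorem \ref{thm:indecomposables}, $V$ is indecomposable. Now apply Botnan--Crawley-Boevey to $V$ to obtain an isomorphism $V \cong \bigoplus_{i \in I} V_i$ in which each $V_i$ is indecomposable with local endomorphism ring. Since $V$ itself is indecomposable, this sum has exactly one summand, so $V \cong V_1$ and its endomorphism ring is local. (Alternatively, one can bypass the appeal to Theorem \ref{thm:indecomposables} by invoking Corollary \ref{cor:complete indecomposable classification} to see that each $V_i$ is a bar or Jordan cell and then using the uniqueness clause of Theorem \ref{thm:decomposition} to conclude $|I|=1$; both routes work.)

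The only real obstacle is confirming that the Botnan--Crawley-Boevey decomposition theorem applies in our setting, which reduces to verifying that $\ARt$ is a small category and that pwf functors $\ARt \to \kVec$ are exactly the persistence modules they consider; both are explicit in Remark \ref{rmk:hom sets} and Definitions \ref{def:representation}--\ref{def:pwf}. Once this is in place, the corollary is an immediate consequence, with the uniqueness of the two decompositions providing the bridge between ``local endomorphism ring'' and ``bar or Jordan cell.''
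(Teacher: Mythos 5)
Your proof is correct and follows essentially the same route as the paper: both reduce the claim to the Botnan--Crawley-Boevey result for small categories combined with Corollary \ref{cor:complete indecomposable classification}. The paper simply cites the Botnan--Crawley-Boevey statement in its ``local endomorphism ring iff indecomposable'' form, whereas you unpack that equivalence from the decomposition theorem (using idempotents for one direction and uniqueness of the decomposition for the other); this is a harmless expansion, not a different argument.
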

\begin{proof}
    Botnan and Crawley-Boevey proved that a persistence module over a small category has a local endomorphism ring if and only if it is indecomposable.
    Since $\ARt$ is a small category, Corollary \ref{cor:complete indecomposable classification} implies the result.
\end{proof}

\subsection*{Acknowledgements} The authors would like to thank Francesco Sala and Jun Zhang for pointing out helpful references.


\begin{thebibliography}{00}

\bibitem{ASS}
I.\ Assem, D.\ Simson, and A.\ Skowro\'{n}ski, \ul{Elements of the Representation Theory of Associative Algebras 1: Techniques of Representation Theory}, London Math. Soc. Stud. Texts {\bf 65}, Cambridge Univ. Press (2006).

\bibitem{BC-B}
M.\ B.\ Botnan and W.\ Crawley-Boevey, \textit{Decomposition of persistence modules}, to appear in Proc. Amer. Math. Soc., preprint:\ \href{https://arxiv.org/pdf/1811.08946.pdf}{https://arxiv.org/pdf/1811.08946.pdf}.

\bibitem{B}
D.\ Burghelea, \underline{New Topological Invariants for Real- and Angle-Valued Maps}, World Scientific Publishing Co.\ Pte.\ Ltd., October 2017, \href{https://doi.org/10.1142/9254}{DOI: 10.1142/9254}

\bibitem{BD}
D.\ Burghelea and T.\ K.\ Dey, \textit{Topological Persistence for Circle-Valued Maps}, Discrete Comput. Geom. \textbf{50} (2013), 69--98, \href{https://doi.org/10.1007/s00454-013-9497-x}{DOI: 10.1007/s00454-013-9497-x}.

\bibitem{BH}
D.\ Burghelea and S.\ Haller, \textit{Topology of angle valued maps, bar codes and Jordan blocks}, J. Appl. Comput. Topol. \textbf{1} (2017), 121--197, \href{https://doi.org/10.1007/s41468-017-0005-x}{DOI: 10.1007/s41468-017-0005-x}.

\bibitem{BR}
M.C.R.\ Butler and C.M.\ Ringel, \textit{Auslander-Reiten sequences with few middle terms and applications to string algebras}, Comm. Algebra \textbf{15} (1987), no.\ 1-2, 145--179, \href{https://doi.org/10.1080/00927878708823416}{DOI: 10.1080/00927878708823416}.


\bibitem{CC-BdS}
F.\ Chazal, W.\ Crawley-Boevey, and V.\ de Silva, \textit{The observable structure of persistence modules}, Homology, Homotopy and Appl. \textbf{18} (2016), no.\ 2, 247--265, \href{https://doi.org/10.4310/HHA.2016.v18.n2.a14}{DOI: 10.4310/HHA.2016.v18.n2.a14}

\bibitem{CM}
S.\ Chowdhury and F.\ M\'emoli, \textit{A functorial Dowker theorem and persistent homology of asymmetric networks}, J. Appl. Comput. Topol. \textbf{2} (2018), 115--175, \href{https://doi.org/10.1007/s41468-018-0020-6}{DOI: 10.1007/s41468-018-0020-6}.


\bibitem{C-B}
W.\ Crawley-Boevey, \textit{Decomposition of pointwise finite-dimensional persistence modules}, J. Algebra Appl. \textbf{14} (2015), no.\ 5, \href{https://doi.org/10.1142/S0219498815500668}{DOI: 10.1142/S0219498815500668}.

\bibitem{C-B18}
\underline{\qquad}, \textit{Classification of modules for infinite-dimensional string algebras},Trans. Amer. Math. Soc. \textbf{370} (2018), 3289--3313, \href{https://doi.org/10.1090/tran/7032}{DOI: 10.1090/tran/7032}

\bibitem{D}
T.\ K.\ Dey, \textit{Computing Height Persistence and Homology Generators in $\R^3$ Efficiently}, Proceedings of the Annual ACM-SIAM Symposium on Discrete Algorithms, 2019, 2649--2662

\bibitem{DMW}
T.\ Dey, F.\ M\'emoli, and Y.\ Wang, \textit{Multiscale mapper: topological summarization via codomain covers}, Proceedings of the twenty-seventh annual ACM-SIAM symposium on discrete algorithms, 10 January 2016, 997--1013.

\bibitem{DF}
P.\ Donovan and M.\ F.\ Freislich, \textit{The Representation Theory of Finite Graphs and Associated Algebras}, Carleton Mathematical Lecture Notes (1973), no.\ 5.

\bibitem{EH}
E.\ G.\ Escolar and Y.\ Hiraoka, \textit{Persistence Modules on Commutative Ladders of Finite Type}, Discrete Comput. Geom. \textbf{55} (2016), no.\ 1, 100--157, \href{https://doi.org/10.1007/978-3-662-44199-2_25}{DOI: 10.1007/978-3-662-44199-2\_25}.

\bibitem{G}
S.\ Guillermou, \textit{Sheaves and symplectic geometry of cotangent bundles}, arXiv:1905.07341v2 [math.SG] (2019), \href{https://arxiv.org/pdf/1905.07341.pdf}{https://arXiv.org/pdf/1905.07341.pdf}

\bibitem{HNHEMN}
Y.\ Hiraoka, T.\ Nakamura, A.\ Hirata, E.\ G.\ Escolar, K.\ Matsue, and Y.\ Nishiura, \textit{Hierarchical structures of amorphous solids characterized by persistent homology}, PNAS \textbf{113} (2016), no.\ 26, 7035--7040, \href{https://doi.org/10.1073/pnas.1520877113}{DOI: 10.1073/pnas.1520877113}

\bibitem{HJ}
T.\ Holm and P.\ J{\o}rgensen, \textit{$SL_2$-tilings and triangulations of the strip}, J. Combin. Theory Ser. A \textbf{120} (2013), no.\ 7, 1817--1834, \href{https://doi.org/10.1016/j.jcta.2013.07.001}{DOI: 10.1016/j.jcta.2013.07.001}.

\bibitem{IRT}
K.\ Igusa, J.\ D.\ Rock, and G.\ Todorov, \textit{Continuous Quivers of Type A (I) The Generalized BarCode Theorem}, arXiv:1909.10499v2 [math.RT] (2019), \href{https://arXiv.org/pdf/1909.10499}{https://arXiv.org/pdf/1909.10499}

\bibitem{IT13}
K.\ Igusa and G.\ Todorov, \textit{Continuous Frobenius Categories}, In: A.\ Buan , I.\ Reiten, and {\O}.\ Solberg (eds), \underline{Algebras, Quivers and Representations}, Abel Symp., vol 8., Springer, Heidelberg (2013), \href{https://doi.org/10.1007/978-3-642-39485-0_6}{DOI: 10.1007/978-3-642-39485-0\underline{~}6}.


\bibitem{IT15}
\underline{\qquad}, \textit{Cluster Categories Coming from Cyclic Posets}, Comm. Algebra \textbf{43} (2015), no.\ 10, 4367--4402, \href{https://doi.org/10.1080/00927872.2014.946138}{DOI: 10.1080/00927872.2014.946138}.

\bibitem{N}
L.\ A.\ Nazarova, \textit{Representations of quivers of infinite type}, Mathematics of the USSR-Izvestiya \textbf{7} (1973) no.\ 4, 749--792.

\bibitem{O}
S.\ Y.\ Oudot, \ul{Persistence Theory: From Quiver Representations to Data Analysis}, Mathematical surveys and monographs, volume 209, 2015

\bibitem{PSS}
L.\ Polterovich, E.\ Shelukhin and V.\ Stojisavljevi\'c, \textit{Persistence modules with operators in Morse and Floer Theory}, Mosc. Math. J. \textbf{17} (2017), no.\ 4, 757--786, \href{http://doi.org/10.17323/1609-4514-2017-17-4-757-786}{DOI:10.17323/1609-4514-2017-17-4-757-786}.

\bibitem{SS}
F. Sala and O. Schiffmann, \textit{Fock space representation of the circle quantum group}, Int. Math. Res. Not. IMRN, rnz628 (2019), \href{https://doi.org/10.1093/imrn/rnz268}{DOI:10.1093/imrn/rnz268}.

\bibitem{TZH}
C.\ M.\ Topaz, L.\ Ziegelmeier, and T.\ Halverson, \textit{Topological Data Analysis of Biological Aggregation Models}, PLOS ONE \textbf{10} (2015), no.\ 5, \href{https://doi.org/10.1371/journal.pone.0126383}{DOI: 10.1371/journal.pone.0126383}

\bibitem{UZ}
M.\ Usher and J.\ Zhang, \textit{Persistent homology and Floer-Novikov theory}, Geom. Topol. \textbf{20} (2016), no.\ 6, 3333--3430, \href{http://doi.org/10.2140/gt.2016.20.3333}{DOI:10.2140/gt.2016.20.3333}.

\bibitem{V-J}
M.\ Vejdemo-Johansson, \textit{Sketches of a platypus: a survey of persistent homology and its algebraic foundations} in \underline{Algebraic Topology: Applications and New Directions}, Contemp. Math. \textbf{620} (2014), 295--320, \href{https://doi.org/10.1090/conm/620}{DOI: 10.1090/conm/620}.

\end{thebibliography}
\end{document}